\newtheorem{prethm}{{\bf Theorem}}
\newenvironment{thm}{\begin{prethm}{\hspace{-0.5
				em}{\bf}}}{\end{prethm}}
\newtheorem{prepro}{{\bf Theorem}}
\newtheorem{preprop}{{\bf Proposition}}
\newenvironment{prop}{\begin{preprop}{\hspace{-0.5
				em}{\bf}}}{\end{preprop}}
\newtheorem{precor}{{\bf Corollary}}
\newenvironment{cor}{\begin{precor}{\hspace{-0.5
				em}{\bf}}}{\end{precor}}
\newtheorem{preconj}{{\bf Conjecture}}
\newenvironment{conj}{\begin{preconj}{\hspace{-0.5
				em}{\bf}}}{\end{preconj}}
\newtheorem{predefi}{{\bf Definition}}
\newtheorem{preremark}{{\bf Remark}}
\newtheorem{preexample}{{\bf Example}}
\newenvironment{example}{\begin{preexample}\rm{\hspace{-0.5
				em}{\bf}}}{\end{preexample}}
\newtheorem{prelem}{{\bf Lemma}}
\newenvironment{lem}{\begin{prelem}{\hspace{-0.5
				em}{\bf}}}{\end{prelem}}
\newtheorem{prelam}{{\bf Lemma}}
\newtheorem{preprob}{{\bf Problem}}
\newtheorem{preali}{{\bf Proof of Theorem 1.}}
\newtheorem{prealii}{{\bf Proof of Theorem 2.}}
\newtheorem{prealiii}{{\bf Proof of Theorem 3.}}
\newtheorem{prealiiii}{{\bf Proof of Theorem 4.}}
\newtheorem{prealij}{{\bf Proof of Theorem 5.}}
\newtheorem{prealijj}{{\bf Proof of Theorem 6.}}
\newtheorem{prealijjj}{{\bf Proof of Theorem 7.}}
\newtheorem{prealijjjk}{{\bf Proof of Theorem 8.}}
\newenvironment{subproof}{\par\noindent {\it Proof}.\ }{\hfill$\blacksquare$\par\vspace{11pt}}
\algnewcommand{\parState}[1]{\State
    \parbox[t]{\dimexpr\linewidth-\algmargin}{\strut #1\strut}}
\title{On the In-Out-Proper Orientations of Graphs}
\author[1]{Ali Dehghan}
\affil[1]{Systems and Computer Engineering Department, Carleton University, Ottawa,   Canada}
\begin{document}
\maketitle

\begin{abstract}
{\small \noindent
An orientation of a graph $G$ is {\it in-out-proper} if any two adjacent vertices have different in-out-degrees, where the in-out-degree of each vertex is equal to the in-degree minus the out-degree of that vertex. The {\it in-out-proper orientation number} of a graph $G$, denoted by   $\overleftrightarrow{\chi}(G)$, is  $ \min_{D\in \Gamma}\max_{v\in V(G)} |d_D^{\pm}(v)|$, where $\Gamma$ is the set of in-out-proper orientations of $G$ and $d_D^{\pm}(v)$ is the in-out-degree of the vertex $v$ in the orientation $D$. 
Borowiecki {\it et al.} proved that the
in-out-proper orientation number
is well-defined for any graph $G$ [Inform. Process. Lett., 112(1-2):1–4, 2012]. So we have 
$ \overleftrightarrow{\chi}(G) \leq \Delta(G)$, where $\Delta(G)$ is the maximum degree of vertices in $G$. We conjecture that there exists a constant number   $c$ such that for every planar graph $G$, we have $\overleftrightarrow{\chi}(G) \leq c $. Towards this speculation, we show that for every tree $T$ we have $\overleftrightarrow{\chi}(T) \leq 3 $ and this bound is sharp. Next, we study the in-out-proper orientation number of subcubic graphs. By using the properties of totally unimodular matrices
we show that there is a polynomial time algorithm to determine whether $\overleftrightarrow{\chi}(G) \leq 2$, for a given graph $G$ with maximum degree three. On the other hand, we show that it is NP-complete to decide whether $\overleftrightarrow{\chi}(G) \leq 1$ for a given bipartite graph $G$ with maximum degree three. Finally, we study  the in-out-proper orientation number of  regular graphs.
}

\begin{flushleft}
\noindent {\bf Key words:} Proper orientation; In-out-proper orientation; In-out-proper orientation number; In-out-degree; Subcubic graphs.

\end{flushleft}

\end{abstract}

\section{Introduction}
\label{}

Let $G$ be a graph and $D$ be an orientation of it. For every vertex $v$ of $G$, we denote the in-degree (out-degree) of $v$ in the orientation $D$ by $d_D^-(v)$ ($d_D^+(v)$, respectively).  
An orientation of a graph $G$ is called {\it proper} if any two adjacent vertices have different in-degrees \cite{MR3095464}. The {\it proper
orientation number} of a graph $G$, denoted by $\overrightarrow{\chi} (G)$,  is the minimum of the maximum in-degree taken over all proper orientations of the graph $G$. A proper orientation $D$ of $G$ can be used to form a proper vertex coloring of $G$ by assigning every vertex $v$ of $G$ the color $d_D^-(v)$ \cite{MR3095464}.  So, we have

\begin{equation}\label{E1}
 \chi(G)-1 \leq  \overrightarrow{\chi} (G)\leq \Delta(G).
\end{equation}

The proper orientation number of graphs has been studied by several authors, for instance see \cite{MR3095464, MR3704829,
MR4158396, 
MR3293286,
MR3514378,
MR3958387,
Dehghan,
semi,
   MR3714524, MR4092623}. In \cite{MR3293286}, Araujo {\it et al.} asked whether the proper orientation number of a planar graph is bounded. Toward this question, it was shown that if $T$ is a tree, then $\overrightarrow{\chi} (T)\leq 4$ \cite{MR3293286}. Also, it was shown that 
every cactus admits a proper orientation with maximum in-degree at most 7 \cite{MR3514378}. Furthermore, it was proved  that every bipartite planar graph with minimum degree at least 3 has proper orientation number at most 3 \cite{MR4092623}.

Let $D$ be an orientation for a given graph $G$. 
The in-out-degree of the vertex $v$ is defined as $ d_D^{\pm}(v)=d_D^-(v) - d_D^+(v)$. Note that for  a given graph $G$ and orientation $D$,  for each vertex $v$ we have

\begin{equation}\label{E2}
 -\Delta(G) \leq  d_D^{\pm}(v) \leq \Delta(G).
\end{equation}

Motivated by the proper orientations of graphs we investigate the in-out-proper orientations. 
An orientation of a graph $G$ is {\it in-out-proper} if any two adjacent vertices have different in-out-degrees.
The {\it in-out-proper orientation number} of a graph $G$, denoted by   $\overleftrightarrow{\chi}(G)$, is  $ \min_{D\in \Gamma}\max_{v\in V(G)} |d_D^{\pm}(v)|$, where $\Gamma$ is the set of in-out-proper orientations of $G$ and $d_D^{\pm}(v)$ is the in-out-degree of the vertex $v$ in the orientation $D$.
For a given graph $G$, we say that an in-out-proper orientation $D$ is {\it optimal} if the maximum of the absolute values of their in-out-degrees is equal to $\overleftrightarrow{\chi}(G)$.

It is interesting to mention that in-out-proper orientation relates to the flow. In more details, an in-out-proper orientation of a graph $G$ can be thought as a ‘flow’ of $G$ that does not satisfy Kirchhoff’s Current Law.
Borowiecki {\it et al.} proved that 
in-out-proper orientation number
is well-defined for any graph $G$ \cite{MR2895496}. 

\begin{thm}\label{T1} \cite{MR2895496}
The in-out-proper orientation number
is well-defined for any graph $G$.
\end{thm}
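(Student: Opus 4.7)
The plan is to establish existence of an in-out-proper orientation of $G$ via an extremal argument. Among all orientations of $G$, let $D$ be one that minimizes the number of conflict edges $C(D) = |\{uv \in E(G) : d_D^{\pm}(u) = d_D^{\pm}(v)\}|$. The goal is to show $C(D) = 0$. Suppose for contradiction that $C(D) \geq 1$, and let $uv$ be a conflict edge with $d_D^{\pm}(u) = d_D^{\pm}(v) = k$; without loss of generality, $uv$ is oriented $u \to v$ in $D$.

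The main tool is the following directed-path reversal observation: if $P \colon u = w_0 \to w_1 \to \cdots \to w_s = z$ is any directed path in $D$, then reversing every edge of $P$ produces an orientation $D_P$ with $d_{D_P}^{\pm}(u) = d_D^{\pm}(u) + 2$, $d_{D_P}^{\pm}(z) = d_D^{\pm}(z) - 2$, and $d_{D_P}^{\pm}(w) = d_D^{\pm}(w)$ for every other vertex $w$. This is immediate, since at each internal vertex $w_i$ the reversal removes one in-edge and one out-edge while adding one of each. Consequently, reversing any such $P$ starting at $u$ eliminates the conflict on $uv$ (since $d^{\pm}(u)$ shifts to $k+2 \neq d^{\pm}(v)$), at the possible cost of creating new conflicts at $u$ (from neighbors with $d_D^{\pm} = k+2$) and at $z$ (from neighbors with $d_D^{\pm} = d_D^{\pm}(z) - 2$). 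I would seek a $P$ for which the net change in $C$ is strictly negative, contradicting minimality of $D$.

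The main obstacle is guaranteeing that some beneficial path-reversal always exists. I would first try the length-one path $P = (u,v)$ and derive the resulting change formula for $C$ in terms of how many neighbors of $u$ lie in $V_{k+2} := \{w : d_D^{\pm}(w) = k+2\}$ versus $V_k$, and symmetrically for $v$. If this simplest move fails at every conflict edge, then minimality of $D$ imposes strong structural constraints relating the level sets $V_{k-2}, V_k, V_{k+2}$ in the neighborhoods of conflicting vertices; I would then try to violate these constraints by extending $P$ to longer directed paths emanating from $u$ (which always exist, since $u$ has an out-neighbor $v$), using the flexibility in the endpoint $z$ to avoid new conflicts. As a fallback, one could replace the modification argument by a Hakimi-style realizability argument: find a labeling $c \colon V(G) \to \mathbb{Z}$ that is proper with $c(v) \equiv \deg(v) \pmod 2$ and $|c(v)| \leq \deg(v)$, and verify the cut condition $|\sum_{v \in S} c(v)| \leq e(S, V(G) \setminus S)$ for every $S \subseteq V(G)$, which is necessary and sufficient for $c$ to be realized as the in-out-degree function of some orientation. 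In either approach, the delicate step is showing that enough combinatorial flexibility is available to break minimality or to satisfy all cut constraints simultaneously.
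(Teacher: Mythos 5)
There is a genuine gap, and you name it yourself: nothing in the proposal actually establishes that a conflict-reducing reversal exists, and with your chosen potential (the number of conflict edges $C(D)$) it is not clear that one does. Reversing the single edge $uv$ destroys the conflict on $uv$ but may create a new conflict at $u$ with every neighbor in $V_{k+2}$ and at $v$ with every neighbor in $V_{k-2}$, and there is no a priori relation between these counts and the conflicts destroyed; passing to longer directed paths only relocates the same difficulty to the terminal vertex $z$. The Hakimi-style fallback is likewise only a plan: you would still have to exhibit a proper labeling $c$ with the stated parity and magnitude constraints that satisfies all cut conditions simultaneously, which is essentially the whole problem restated. (For the record, the paper itself gives no proof of Theorem 1; it is quoted from Borowiecki et al., so the relevant comparison is with their argument.)

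The standard repair keeps your edge-reversal mechanics but changes the extremal quantity: take $D$ maximizing $\Phi(D)=\sum_{v\in V(G)}\bigl(d_D^{\pm}(v)\bigr)^2$ over all orientations of $G$. If an edge is oriented $u\to v$, reversing it replaces $d_D^{\pm}(u)$ and $d_D^{\pm}(v)$ by $d_D^{\pm}(u)+2$ and $d_D^{\pm}(v)-2$ and leaves every other vertex unchanged, so $\Phi$ changes by $4\bigl(d_D^{\pm}(u)-d_D^{\pm}(v)\bigr)+8$. Maximality of $\Phi$ forces this to be nonpositive, i.e.\ $d_D^{\pm}(u)\le d_D^{\pm}(v)-2$ for every arc $u\to v$; in particular adjacent vertices always have distinct in-out-degrees, and $D$ is in-out-proper. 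This one-line local optimality condition replaces the ``find a beneficial path'' search, which is precisely the step your proposal leaves unproven; the conflict-count potential lacks the monotonicity that makes the argument close, which is why your version stalls where it does.
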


By Theorem \ref{T1} and noting that for a given graph $G$ every in-out-proper orientation  defines a proper vertex coloring for $G$, we have

\begin{equation}\label{E0}
 \lceil \frac{\chi(G)-1}{2} \rceil \leq  \overleftrightarrow{\chi}(G)\leq \Delta(G).
\end{equation}

\begin{example}
Let $G$ be a cycle. The degree of each vertex is two, so in each in-out-proper orientation of $G$, the in-out-degree of each vertex is $-2,+2$, or $0$. The graph $G$ has at least two adjacent vertices, so $\overleftrightarrow{\chi}(G) \geq 2$.
On the other hand, by Theorem \ref{T1}, $\overleftrightarrow{\chi}(G) \leq 2$.
Consequently, for every  cycle $C_n$ we have $\overleftrightarrow{\chi}(G)=2$.
\end{example}

Araujo {\it et al.} asked whether the proper orientation number of a planar graph is bounded.
We pose the following conjecture for the in-out-proper orientation number of planar graphs.

\begin{conj}\label{C1}
There is a constant number $c$ such that for every planar graph $G$, we have $ \overleftrightarrow{\chi}(G) \leq c$.
\end{conj}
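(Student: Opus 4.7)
Since Conjecture \ref{C1} is the in-out-proper analogue of the still-open question of Araujo \emph{et al.} on ordinary proper orientations of planar graphs, a full resolution is unlikely to be routine; my plan would be a three-stage attack that mirrors the route already successful for the tree bound of $3$.

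The first stage would be to promote the tree argument to outerplanar graphs and then to series-parallel graphs. For outerplanar graphs I would induct on a vertex $v$ of degree at most two on the outer face. After fixing an optimal in-out-proper orientation of $G-v$, there are at most two edges incident to $v$ to orient, producing in-out-degree values in $\{-2,0,+2\}$ (degree two) or $\{-1,+1\}$ (degree one), and I would argue that the already-fixed in-out-degrees of the one or two neighbors can be perturbed at bounded cost to avoid a conflict. The real technical content lies in a reorientation lemma: given an optimal orientation and a vertex $u$ whose in-out-degree we wish to change by $\pm 2$, find an alternating path or even cycle through $u$ along which reversing edges shifts $d_D^{\pm}(u)$ by $\pm 2$ but leaves $d_D^{\pm}$ unchanged at every other vertex, without driving any vertex out of the band $\{-c,\dots,c\}$.

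The second stage would push this into planar graphs of bounded treewidth via a standard dynamic program over a tree decomposition: if the target constant $c$ is truly absolute, then for each bag we can enumerate the $O((2c{+}1)^{|\text{bag}|})$ possible assignments of in-out-degrees of the bag vertices together with the partial orientations of the edges inside the bag, and combine them bottom-up. Once a constant bound is secured for bounded treewidth, the remaining case is planar graphs of unbounded treewidth, for which I would invoke a discharging argument: take a minimum counterexample and locate a reducible configuration via Euler's formula (for instance, a vertex of degree at most $5$ meeting a further structural constraint on its neighborhood), then delete or contract it and reinsert the missing edges using the reorientation lemma above.

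The main obstacle, in my view, is precisely that reorientation lemma. Unlike the setting of $\overrightarrow{\chi}$, here each edge-reversal changes the in-out-degrees of \emph{both} endpoints by $\pm 2$, so the invariant one tries to propagate along an augmenting path is fragile: a single flip can push a previously legal vertex out of the interval $[-c,c]$ or create a new conflict with a further neighbor. Controlling this propagation globally, in a graph with many cycles and possibly many vertices of equal degree parity, is exactly the reason the analogous conjecture for $\overrightarrow{\chi}$ of planar graphs has resisted solution. Any successful attack on Conjecture \ref{C1} will almost certainly require a novel structural insight at this point, perhaps exploiting the connection between in-out-proper orientations and relaxations of Kirchhoff's current law noted after Theorem \ref{T1}, so that discharging can be replaced or supplemented by a flow-based potential argument.
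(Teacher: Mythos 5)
The statement you are addressing is posed in the paper as Conjecture~1 and is \emph{not} proved there; the paper only offers the tree case (Theorem~2, with $c=3$) as supporting evidence. Your submission is likewise not a proof but a research programme, and you say as much yourself, so there is no completed argument to evaluate for correctness. The decisive gap is the one you name: the reorientation lemma. Without a precise statement and proof of a lemma guaranteeing that the in-out-degree of a prescribed vertex can be shifted by $\pm 2$ while every other vertex remains conflict-free and inside $[-c,c]$, neither the outerplanar induction nor the discharging step can be executed. This is exactly where naive attempts fail: reversing a single edge changes the in-out-degrees of \emph{both} endpoints by $2$ in opposite directions, so any augmenting walk must terminate at a vertex able to absorb the change, and in a $2$-connected planar graph there is no a priori guarantee that such a terminus is reachable without creating a new conflict along the way.

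Two further points. First, your bounded-treewidth stage conflates an algorithm with a bound: a dynamic program over a tree decomposition can \emph{decide}, for each fixed $c$, whether $\overleftrightarrow{\chi}(G)\leq c$, but it does not by itself show that some constant $c$ suffices for all graphs of bounded treewidth; for that you still need an inductive or structural argument of the kind you sketch for outerplanar graphs, and that argument again rests on the missing lemma. Second, a small observation that may help you localize the difficulty: since $d_D^{\pm}(v)\equiv d(v)\pmod 2$ for every orientation $D$, conflicts can only arise between adjacent vertices whose degrees have the same parity, so the reorientation lemma only ever needs to be applied across such pairs. As submitted, however, the proposal establishes nothing beyond what the paper already proves, and Conjecture~1 remains open.
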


Towards  Conjecture \ref{C1}, 
we study the in-out-proper orientation number of trees and show that for every tree $T$ we have $\overleftrightarrow{\chi}(T) \leq 3 $.

\begin{thm}\label{T2}
For every tree $T$ we have $\overleftrightarrow{\chi}(T) \leq 3 $ and this bound is sharp. 
\end{thm}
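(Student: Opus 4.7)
The plan is to root $T$ at an arbitrary vertex and build an in-out-proper orientation in a top-down recursion. For a rooted subtree $T_v$ whose root $v$ is joined to its parent by a (possibly virtual) edge contributing $s_v\in\{+1,-1\}$ to $d_D^{\pm}(v)$, let $A(T_v,v,s_v)\subseteq\{-3,\dots,3\}$ be the set of values $\tau_v=d_D^{\pm}(v)$ attainable by an in-out-proper orientation $D$ of $T_v$ with $|d_D^{\pm}(u)|\le 3$ for every $u\in V(T_v)$. The central lemma I would prove is that $A(T_v,v,s_v)=\{s_v\}$ whenever $v$ is a leaf of $T_v$, and $|A(T_v,v,s_v)|\ge 2$ whenever $v$ has at least one child in $T_v$.

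The lemma is proved by induction on $|V(T_v)|$. Writing $w_1,\dots,w_c$ for the children of $v$, one has $\tau_v=s_v-\sum_i s_{w_i}$, where $s_{w_i}\in\{+1,-1\}$ records the orientation of $vw_i$ from the perspective of $w_i$; thus $\tau_v$ has the parity of $d(v)$ and its admissible values in $[-3,3]$ form either $\{-2,0,2\}$ or $\{-3,-1,1,3\}$. The induction hypothesis gives $|A(T_{w_i},w_i,s_{w_i})|\ge 2$ for every internal child, so each such child can avoid whatever target value of $\tau_v$ we pick. The only real obstruction comes from leaf children, whose value $\tau_{w_i}=s_{w_i}\in\{\pm 1\}$ is forced: this forces every leaf child of $v$ to be an in-child of $v$ when $\tau_v=+1$, and an out-child when $\tau_v=-1$. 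A finite case analysis on $c$, on its parity, and on the number of leaf children then shows that in every configuration at least two admissible values of $\tau_v$ survive.

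Once the lemma is in place, the theorem follows by applying the same analysis at the root of $T$ (where there is no parent edge): the set of feasible values of $d_D^{\pm}(r)$ is nonempty, and the top-down construction then propagates a valid orientation through $T$ because every internal child can avoid the parent's in-out-degree by the lemma. The main obstacle is the inductive step itself, specifically the small-degree leaf-dominated configurations (for instance a vertex of degree $3$ with two leaf children, or a star-like vertex with many leaves and few or no internal children) where the range constraint $|\tau_v-s_v|\le c$ and the forbidden values $\tau_v\in\{\pm 1\}$ interact; one has to verify case by case that at least two feasible values of $\tau_v$ remain.

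For sharpness, take $T=K_{1,3}$. Its centre $v$ has odd degree $3$, so $d_D^{\pm}(v)$ is odd in any orientation, while each leaf $\ell$ satisfies $d_D^{\pm}(\ell)\in\{-1,+1\}$. In-out-properness then forbids $d_D^{\pm}(v)\in\{-1,+1\}$, so $|d_D^{\pm}(v)|=3$ is forced, giving $\overleftrightarrow{\chi}(K_{1,3})=3$.
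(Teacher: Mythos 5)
Your proof is correct, but it takes a genuinely different route from the paper's. The paper roots the tree, processes the vertices in breadth-first order, and forces the \emph{sign} of every in-out-degree by the parity of the distance to the root (even levels receive values in $\{0,1,2,3\}$, odd levels in $\{0,-1,-2,-3\}$), with a handful of explicit re-orientation repairs for vertices of degree one and two; properness then follows largely from this sign separation. You instead prove a flexibility lemma --- every rooted subtree whose root is internal admits at least two feasible root values for either orientation of the parent edge --- and propagate choices top-down; no level-parity structure is needed, and the achievable sets may mix signs (e.g.\ $A=\{3,-1\}$). Your route is the standard ``two admissible values at every internal node'' argument for trees: it is cleaner to verify and more robust, whereas the paper's is an explicit linear-time procedure. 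Two points you should make explicit when writing this up. First, the deferred case analysis does close: for $d(v)$ even the targets $0$ and $2s_v$ are always realizable (being even, they never conflict with a leaf child, whose value is $\pm1$); for $d(v)$ odd the pair $\{3s_v,\,-s_v\}$ works when $v$ has exactly two children and $\{3,-3\}$ works when it has at least four, the only targets that leaf children can kill being $\pm1$. Second, at the root the same analysis only yields $|A|\ge 1$, not $\ge 2$ (for $K_{1,3}$ only $\pm3$ survive), which is all the theorem needs and is precisely what your sharpness example exploits; that example coincides with the paper's.
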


A graph is  called subcubic if it has maximum degree at most three. Let $G$ be a subcubic graph. By Theorem \ref{T1}, we have $ \overleftrightarrow{\chi}(G) \leq 3$. By using the properties of totally unimodular matrices
we show that there is a polynomial time algorithm to determine whether $\overleftrightarrow{\chi}(G) \leq 2$.

\begin{thm}\label{T3}
There is a polynomial time algorithm to determine whether $\overleftrightarrow{\chi}(G) \leq 2$, for a given graph $G$ with maximum degree three. 
\end{thm}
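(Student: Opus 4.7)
The plan is to reformulate the decision problem as an integer linear feasibility question in $\{0,1\}$ orientation variables and then exploit total unimodularity. After fixing any reference orientation $D_0$ of $G$, let $x_e\in\{0,1\}$ indicate whether the chosen orientation of $e$ agrees with $D_0$; every $d_D^{\pm}(v)$ then becomes an affine function of the $x_e$'s whose $\pm 1$ coefficients come from the signed incidence matrix of $D_0$, the canonical totally unimodular matrix.

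A parity argument sharply reduces the list of nontrivial constraints. Since $d^{\pm}(v)$ has the same parity as $\deg(v)$, the bound $|d^{\pm}(v)|\leq 2$ is automatic whenever $\deg(v)\in\{1,2\}$ and is equivalent to $d^{\pm}(v)\in\{-1,+1\}$ at each degree-$3$ vertex; moreover, the in-out-properness constraint is automatic between two adjacent vertices of opposite degree-parity. Hence only three nontrivial families of constraints remain: (i) the linear bounds $-1\leq d^{\pm}(v)\leq 1$ at each degree-$3$ vertex; (ii) the linear equalities $d^{\pm}(u)+d^{\pm}(v)=0$ at each edge $uv$ with both $\deg(u),\deg(v)$ odd; and (iii) the disequalities $d^{\pm}(u)\neq d^{\pm}(v)$ at each edge $uv$ between two degree-$2$ vertices.

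To dispose of (iii), I would analyze the subgraph $H$ induced on the degree-$2$ vertices of $G$: since $H$ has maximum degree at most $2$, it decomposes into vertex-disjoint paths and cycles. On each such component, writing $a_j\in\{\pm 1\}$ for the along-the-path orientation of the $j$-th edge, a direct computation gives $d^{\pm}(v_i)=a_{i-1}-a_i$, so the disequality between consecutive degree-$2$ vertices reduces to the purely local rule ``no three consecutive path-edges all point the same way''. A short greedy argument on $\pm 1$-sequences shows this rule can always be satisfied in the interior of the path once the two boundary edges (joining the path endpoints to non-degree-$2$ vertices) have been oriented, and cycle components of $H$ are handled directly by the orientation of $C_n$ given in the Example. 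Thus (iii) can be dropped from the IP and each degree-$2$ path's interior filled in as a post-processing step.

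What remains is a feasibility problem in (i) and (ii) alone. I would first test in linear time whether the odd-odd subgraph of $G$ is bipartite, outputting NO otherwise (as in $K_4$, where a parity obstruction makes the equalities in (ii) already integer-infeasible). Given bipartiteness, fixing a $2$-coloring of each component prescribes an integer value $d^-(v)\in\{\lfloor\deg(v)/2\rfloor,\lceil\deg(v)/2\rceil\}$ at every odd-degree vertex, and the remaining task becomes the classical orientation-with-prescribed-in-degrees problem, with the free range $0\leq d^-(v)\leq 2$ at each degree-$2$ vertex; its constraint matrix is the vertex-edge incidence matrix of a directed bipartite auxiliary graph, which is totally unimodular, so the LP relaxation admits an integral optimum and feasibility can be decided in polynomial time (equivalently, as a bipartite $b$-matching or max-flow instance). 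The main obstacle is that each of the $c$ connected components of the odd-odd subgraph offers two possible bipartitions, for $2^c$ global $2$-colorings in total; I expect to resolve this either by showing that realizability of the TU LP is invariant under flipping an individual component's bipartition, so that any single $2$-coloring suffices to test, or by folding the $c$ bipartition-choices into the IP as additional $\{0,1\}$-variables while preserving total unimodularity (for example, by verifying Ghouila--Houri's criterion on the augmented matrix).
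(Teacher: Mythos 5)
Your strategy is, at its core, the same as the paper's: parity forces $d_D^{\pm}(v)\in\{\pm 1\}$ at odd-degree vertices and $d_D^{\pm}(v)\in\{0,\pm 2\}$ at degree-two vertices, so properness is automatic across mixed-parity edges; bipartiteness of the odd part is a necessary condition; the degree-two chains are flexible enough to be filled in afterwards (your ``no three consecutive edges pointing the same way'' rule with the greedy alternating completion is exactly the flexibility the paper packages as its proposition on orienting paths with prescribed $\pm 1$ endpoints); and the residual prescribed-in-degree orientation problem is a bipartite $b$-matching whose constraint matrix is the incidence matrix of the edge--vertex subdivision --- which is precisely the paper's auxiliary bipartite graph $H$ and its totally unimodular system. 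The main difference is cosmetic: the paper materializes the reduction by deleting degree-two vertices and attaching dummy leaves to form a degree-$\{1,3\}$ graph $G''$, whereas you keep $G$ intact and argue on the odd--odd subgraph directly.

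The one point where your write-up stops short is the step you yourself flag, the $2^{c}$ bipartition choices. The first of your two proposed fixes does work, and closing it takes only a few lines. Two distinct components of the odd--odd subgraph share no edge, and every edge leaving such a component ends at a degree-two vertex, which carries no constraint in the residual prescribed-in-degree problem; hence that feasibility system splits into edge-disjoint blocks, one per component, each block consisting of all edges incident to that component's odd vertices. Reversing every edge of one block negates $d^{\pm}$ at every odd vertex of that component and does not touch any other constrained vertex, so the two colorings of a component are feasible or infeasible together, and a single arbitrary global $2$-coloring suffices to test. (This is the same reversal symmetry the paper invokes, combined with its per-component treatment of $G''$; the path proposition is what guarantees the components' choices never conflict along the connecting degree-two chains.) With that paragraph inserted, your argument is complete and correct.
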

 
On the other hand, we show that it is NP-complete to decide whether $\overleftrightarrow{\chi}(G) \leq 1$ for a given bipartite graph $G$ with maximum degree three.
 
\begin{thm}\label{T4}
It is NP-complete to decide whether $\overleftrightarrow{\chi}(G) \leq 1$ for a given bipartite graph $G$ with maximum degree three.
\end{thm}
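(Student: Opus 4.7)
Membership in NP is immediate: given a candidate orientation $D$, one checks in polynomial time that $|d_D^{\pm}(v)|\le 1$ for every vertex and that $d_D^{\pm}(u)\ne d_D^{\pm}(v)$ on every edge. For hardness I plan a polynomial-time reduction from a bounded-occurrence variant of \textsc{NAE-3SAT} (for instance, the monotone version in which each variable appears in exactly three clauses), which is well known to remain NP-complete.

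The starting point is a reformulation of $\overleftrightarrow{\chi}(G)\le 1$ as a Boolean constraint satisfaction problem on the edge-orientation variables. Writing $\sigma(e,v)\in\{+1,-1\}$ for $+1$ when $e$ points into $v$ and $-1$ otherwise, we have $d_D^{\pm}(v)=\sum_{e\ni v}\sigma(e,v)$, so the admissible vertex labels are rigidly constrained by the degrees: a degree-$2$ vertex must receive label $0$ (its two incident edges carry opposite signs, a propagation/equality constraint), a degree-$3$ vertex must receive label $\pm 1$ (its three incident edges are not all of equal sign, a not-all-equal constraint), and a degree-$1$ vertex inherits the sign of its unique edge; additionally, adjacent vertices must receive different labels. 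From this local dictionary I would construct three gadgets. A \emph{clause gadget} for a clause $C_j$ is a single degree-$3$ vertex $c_j$ whose three incident edges are identified with the three literals of $C_j$, so that the not-all-equal constraint at $c_j$ is exactly the NAE-3SAT constraint on the clause. A \emph{wire gadget} joining a variable to a clause is a path alternating between degree-$2$ and degree-$3$ internal vertices, which by propagation transmits one bit of orientation along its length and whose length parity can be tuned to preserve or invert that bit, handling the literal sign uniformly and avoiding adjacent degree-$2$ vertices. A \emph{variable gadget} for a variable $x_i$ occurring $k$ times is a small bipartite subcubic subgraph engineered so that its admissible orientations come in exactly two global configurations (one per truth value of $x_i$), each fixing a common signal on all $k$ wires attached to it.

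Given these gadgets, the instance $G_\phi$ is assembled by taking one clause gadget per clause, one variable gadget per variable, and one wire per literal occurrence. Subcubicity is immediate by the bounded-occurrence assumption, and bipartiteness is secured by adjusting each wire's length to have appropriate parity. Correctness follows in both directions: a NAE-satisfying truth assignment of $\phi$ induces, via the gadgets, an orientation of $G_\phi$ satisfying every local constraint and hence witnessing $\overleftrightarrow{\chi}(G_\phi)\le 1$; conversely, any orientation with $|d_D^{\pm}(v)|\le 1$ and with adjacent vertices receiving distinct labels must satisfy the propagation and not-all-equal constraints inside each gadget, and the variable gadgets' forced consistency reads off a NAE-satisfying assignment for $\phi$.

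The principal obstacle will be the design and verification of the variable gadget. Because a single degree-$3$ vertex already imposes a not-all-equal (rather than equality) constraint on its three incident edges, a variable's bit cannot be fanned out from a single hub; the pairwise-equal behaviour of the $k$ output wires must emerge from the coordinated orientation of a larger subgraph. Building such a subgraph that is bipartite and subcubic, that admits exactly the two intended global orientations and no others, and that remains compatible with arbitrary wire parities, will require a careful local case analysis of admissible orientations together with a parity argument on the gadget's cycle structure.
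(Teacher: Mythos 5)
Your high-level plan coincides with the paper's: both reduce from a monotone, bounded-occurrence Not-All-Equal satisfiability variant (the paper uses \textsc{Cubic Monotone Not-All-Equal (2,3)-Sat}, where every variable occurs exactly three times), both exploit the same local dictionary (a degree-$3$ vertex forces a not-all-equal constraint on its incident edges, a degree-$2$ vertex forces in-out-degree $0$ and hence opposite orientations on its two edges), and both realize each clause as a single degree-$3$ vertex. The NP-membership argument is also the same.

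However, there is a genuine gap: the variable gadget, which you yourself identify as ``the principal obstacle,'' is never constructed. You state the properties it must have (bipartite, subcubic, exactly two admissible global orientations, each broadcasting a consistent sign to all attachment points) but give no graph achieving them and no verification that one exists. This is not a routine detail to be deferred --- it is the entire content of the hardness proof, precisely because, as you correctly observe, a degree-$3$ vertex imposes not-all-equal rather than equality, so consistency of the three occurrences cannot be obtained from a single hub and must be engineered through a rigidity argument on a larger subgraph. The paper resolves exactly this point with an explicit gadget $I_x$ (its Fig.~3) having three pendant output vertices $x_1,x_2,x_3$, together with a case analysis (its Proposition~8) showing that in any orientation with all in-out-degrees in $\{0,\pm1\}$ that is proper on the internal edges, the orientation propagates rigidly from a two-coloring of the gadget and forces $d_D^{\pm}(x_1)=d_D^{\pm}(x_2)=d_D^{\pm}(x_3)\in\{+1,-1\}$, with both values realizable. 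Without an analogous construction and case analysis, neither direction of your correctness claim can be checked: the forward direction needs the two intended orientations to actually exist, and the backward direction needs the proof that no third configuration is admissible. A secondary, smaller omission: if you reduce from a version allowing $2$-literal clauses (as the paper does), the degree-$3$ clause vertex does not work for them; the paper handles this by merging the clause vertex with the two incident output vertices into a single degree-$2$ vertex, whose forced in-out-degree $0$ encodes the binary not-all-equal constraint. Your wire gadgets are not needed in the paper's construction (clause vertices attach directly to the gadget outputs), but they are harmless provided you also verify properness along the wires and at their junctions.
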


Next, we study the computational complexity of determining the the in-out-proper orientation number of  4-regular graphs. Note that for any 4-regular graph $G$ we have  $2 \leq \overleftrightarrow{\chi}(G) \leq 4$.

\begin{thm}\label{T5}
 It is NP-complete to decide whether $\overleftrightarrow{\chi}(G) \leq 2$ for a given 4-regular graph $G$.
\end{thm}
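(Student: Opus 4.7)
Membership in NP is immediate: any candidate orientation $D$ is a polynomial-size witness, verifiable in linear time by checking that $\max_v |d_D^{\pm}(v)|\leq 2$ and that no edge joins two vertices with equal in-out-degree.

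For hardness I first reformulate the question. Since every vertex of a 4-regular graph has even degree, each in-out-degree lies in $\{-4,-2,0,2,4\}$, so $\overleftrightarrow{\chi}(G)\leq 2$ is equivalent to the existence of an orientation whose in-out-degree function $c:V(G)\to\{-2,0,2\}$ is simultaneously (i) a proper 3-coloring of $G$ with palette $\{-2,0,2\}$ and (ii) \emph{realizable}, i.e.\ the induced in-degree prescription $d^-(v)\in\{1,2,3\}$ is achievable by some orientation of the edges. Condition (ii) on its own is a degree-constrained orientation problem, decidable in polynomial time via a standard network-flow reduction; entangling it with the proper-coloring constraint (i) is what turns the problem genuinely combinatorial.

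The plan is to reduce from a known NP-complete problem such as NAE-3SAT or 3-coloring on 4-regular graphs. The reduction rests on a family of 4-regular gadgets: a \emph{forcer} gadget that pins a designated external vertex to a prescribed color; an \emph{equality} (or inequality) gadget that ties the colors of two externals; and a \emph{clause} gadget that enforces the desired satisfiability or coloring constraint on its externals. Given such gadgets, substitute each variable or vertex of the source instance with a gadget copy, wire them together through their externals, and pad with filler edges so that the overall graph is 4-regular. A YES-instance then produces a globally consistent color pattern that every gadget realizes locally, while a NO-instance forces at least one gadget to be infeasible.

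The crux is the gadget design. Each gadget must simultaneously (1) restrict its externally visible colors to the intended pattern under every proper 3-coloring, (2) admit an orientation realizing every intended external pattern, so that feasible patterns survive the edge-coupling across the boundary, and (3) compose into a 4-regular whole. Because a color in $\{-2,0,2\}$ fully determines a vertex's in-degree, the local orientation choices inside a gadget are tightly coupled to those in neighboring gadgets through the shared boundary edges; ruling out spurious orientations that realize illegal colorings reduces to analyzing each gadget's orientation-feasibility polytope under every boundary pattern. I expect this last case analysis to be the principal technical obstacle.
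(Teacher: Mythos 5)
Your NP-membership argument and your reformulation (in a $4$-regular graph the in-out-degrees lie in $\{0,\pm 2,\pm 4\}$, so the question is whether some proper $3$-coloring with palette $\{-2,0,2\}$ is realizable as an orientation) are both correct and match the paper's starting point. But from there your proposal is a plan, not a proof: you announce forcer, equality and clause gadgets for a reduction from NAE-3SAT or $4$-regular $3$-coloring, yet you construct none of them, and you yourself flag the gadget feasibility analysis as ``the principal technical obstacle.'' That obstacle is exactly the content of the theorem. The coupling you describe is real --- a proper $\{-2,0,2\}$-coloring prescribes in-degrees $1,2,3$, and an arbitrary proper $3$-coloring of a $4$-regular graph need not be realizable (already the counting condition $\sum_v d^-(v)=|E|$ forces the color classes $-2$ and $+2$ to have equal size, which a generic coloring violates) --- so without explicit gadgets and a verification that every intended boundary pattern is orientation-feasible while every unintended one is excluded, there is no reduction. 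As it stands the hardness direction is entirely missing.

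The paper sidesteps the gadget machinery: it reduces from $3$-edge-colorability of cubic graphs (Holyer) by taking $H$ to be the line graph of the cubic graph $G$, which is automatically $4$-regular. A proper in-out-orientation of $H$ with values in $\{0,\pm 2\}$ is a proper $3$-coloring of $L(G)$, i.e.\ a $3$-edge-coloring of $G$; conversely, given a $3$-edge-coloring one realizes the colors $-2,0,2$ by deleting the edges of $H$ joining the classes $1$ and $3$, orienting the remaining (even-degree) subgraph in an Eulerian fashion, and orienting the deleted edges from class $1$ to class $3$. This makes the realizability step a short explicit construction rather than a case analysis over a gadget's orientation polytope. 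If you want to salvage your approach you would need to actually build and verify your gadgets; otherwise the line-graph reduction is the economical route.
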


Let $G$ be a 4-regular graph with $\overleftrightarrow{\chi}(G) \leq 3$ and suppose that $D$ is an optimal  in-out-proper orientation. In $G$ the degree of each vertex is four, so the in-out-degree of each vertex is in $\{0,\pm 2\}$. Thus, we have $\overleftrightarrow{\chi}(G) \leq 3$ if and only if $\overleftrightarrow{\chi}(G) \leq 2$.   Thus, by Theorem \ref{T5}, we have the following corollary. 

\begin{cor}
 It is NP-complete to decide whether $\overleftrightarrow{\chi}(G) \leq 3$ for a given 4-regular graph $G$.
\end{cor}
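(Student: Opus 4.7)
The plan is to reduce the claim directly to Theorem~\ref{T5} via the parity observation already sketched in the paragraph just above the corollary. The first step is to note that for any orientation $D$ of a $4$-regular graph $G$ and any vertex $v$, we have $d_D^-(v)+d_D^+(v)=4$, so $d_D^\pm(v)=2d_D^-(v)-4$ is always even and therefore lies in $\{-4,-2,0,2,4\}$.

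The second step is to conclude from this parity that $\max_{v\in V(G)}|d_D^\pm(v)|\in\{0,2,4\}$ for every orientation $D$, so no value strictly between $2$ and $4$ is attainable. Consequently, for $4$-regular graphs the two conditions $\overleftrightarrow{\chi}(G)\leq 3$ and $\overleftrightarrow{\chi}(G)\leq 2$ are logically equivalent as predicates on $G$.

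The final step is to assemble the NP-completeness from this equivalence. Membership in NP is immediate: given an orientation $D$ as certificate, one verifies in polynomial time that no two adjacent vertices share an in-out-degree and that $|d_D^\pm(v)|\leq 3$ holds for every $v$. Hardness is obtained by the identity reduction from the problem treated in Theorem~\ref{T5}: a $4$-regular input $G$ is a YES-instance at threshold $3$ if and only if it is a YES-instance at threshold $2$, so the trivial map $G\mapsto G$ is a polynomial-time many-one reduction that preserves answers. There is essentially no obstacle beyond carefully recording the parity argument; the entire weight of the corollary is carried by Theorem~\ref{T5}.
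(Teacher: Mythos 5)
Your proposal is correct and matches the paper's argument: the parity observation that in a $4$-regular graph every in-out-degree lies in $\{0,\pm 2,\pm 4\}$ makes the thresholds $2$ and $3$ equivalent, and the corollary then follows from Theorem~\ref{T5} by the identity reduction. Your write-up is slightly more explicit than the paper's (which states the equivalence in the paragraph preceding the corollary), but the route is the same.
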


The organization of the rest of the paper is as follows: In Section \ref{S2}, we present some definitions and notations. 
This is followed in Section \ref{S3} by 
some  bounds for the in-out-proper orientation number of graphs. Next, in Section \ref{S4}, we prove that the in-out-proper orientation number of each tree is at most three. In Section \ref{S5}, we focus on the in-out-proper orientation number of subcubic graphs. Section \ref{S6} is devoted to
the computational complexity of   regular graphs.
The paper is concluded with some remarks in Section \ref{S7}.

\section{Definitions}
\label{S2}

In this work, all graphs are finite and simple (i.e. without loops and multiple edges). We follow \cite{MR1367739} for terminology and notation where they are not defined here. If $G$ is a graph, then $V (G)$ and $E(G)$ denote the vertex set and the edge set of $G$, respectively. For every $v\in V(G)$, $d_G(v)$ denotes the degree of $v$ in the graph $G$. Also, $\Delta(G)$ denotes the maximum degree of $G$. The distance between two vertices $v$ and $u$, denoted by $distance(v,u)$, is the length of a shortest path between them.

An orientation $D$ of a graph $G$ is a digraph obtained from the graph $G$ by replacing each edge by just one of the two possible arcs with the same endvertices. For every vertex $v$,  the in-degree (the out-degree) of $v$ in the orientation $D$, denoted by
$d_D^{-}(v)$ ($d_D^{+}(v)$), is the number of arcs with head (tail) $v$ in $D$.  Also, the in-out-degree of $v$, denoted by $d_D^{\pm}(v)$, is defined as $d_D^{-}(v) - d_D^{+}(v)$.
An orientation of a graph $G$ is {\it in-out-proper} if any two adjacent vertices have different in-out-degrees.
The {\it in-out-proper orientation number} of a graph $G$, denoted by   $\overleftrightarrow{\chi}(G)$, is  $ \min_{D\in \Gamma}\max_{v\in V(G)} |d_D^{\pm}(v)|$, where $\Gamma$ is the set of in-out-proper orientations of $G$.

Let $G$ be a graph. A proper vertex $t$-coloring of  $G$ is a function $f: V(G) \longrightarrow \{1,\ldots,t\}$ such that if $u,v\in V(G)$ are adjacent, then $f(u)$ and $f(v)$ are different. The smallest integer $t$ such that $G$ has a proper vertex $t$-coloring is called the  chromatic number of $G$ and denoted by $\chi(G)$. 
Also, a proper edge $t$-coloring of  $G$ is a function $f: E(G) \longrightarrow \{1,\ldots,t\}$ such that if $e,e'\in E(G)$ have a same endvertex, then $f(e)$ and $f(e')$ are different. The smallest integer $t$ such that $G$ has a proper edge $t$-coloring is called the edge chromatic number (or chromatic index) of $G$ and denoted by $\chi'(G)$. 

For a graph $G = (V, E)$, the line graph of $G$   is a graph with the set of vertices $E(G)$ and two vertices  are adjacent if and only if their corresponding edges share a common endpoint in $G$.

A matrix $A$ is totally unimodular if every square submatrix of $A$ has determinant $1$, $0$ or $-1$. The importance of totally unimodular matrices stems from the fact that when an integer linear program has all-integer coefficients and the matrix of coefficients is totally unimodular, then the optimal solution of its relaxation is integral. Therefore, it can be obtained in polynomial time \cite{MR948455}.

\section{General bounds}
\label{S3}

For every graph $G$ we have $ \overleftrightarrow{\chi}(G) \leq \Delta(G)$. It is good to mention that the inequality is tight for any complete graph. For any $n$, each vertex of $K_n$ can have only in-out-degree $n-1,n-3, \ldots,-(n-3),-(n-1)$. The number of these values is exactly $n$. Thus, the in-out-proper orientation number of $K_n$ is at least $n-1=\Delta (K_n)$.

Next, we present some observation for the in-out-proper orientation number of graphs.

\begin{lem}
Let $G$ be a graph with at least one edge and assume that  $D$ is an in-out-proper orientation of $G$. Then in the orientation $D$ there is at least one vertex with positive in-out-degree and at least one vertex with negative in-out-degree.
\end{lem}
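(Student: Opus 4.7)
The plan is to exploit the handshake-like identity that the sum of in-out-degrees vanishes, together with the in-out-proper property on a single edge.

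First, I would observe the global identity
\begin{equation*}
\sum_{v\in V(G)} d_D^{\pm}(v) \;=\; \sum_{v\in V(G)} d_D^{-}(v) \;-\; \sum_{v\in V(G)} d_D^{+}(v) \;=\; |E(G)|-|E(G)| \;=\; 0,
\end{equation*}
which holds for \emph{any} orientation $D$, since each arc contributes exactly one unit to each of $\sum_v d_D^-(v)$ and $\sum_v d_D^+(v)$.

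Next, I would rule out the possibility that every vertex has in-out-degree zero. Since $G$ has at least one edge $uv$, if every vertex had $d_D^{\pm}=0$ then in particular $d_D^{\pm}(u)=d_D^{\pm}(v)=0$, contradicting the assumption that $D$ is in-out-proper. Hence at least one vertex has nonzero in-out-degree.

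Finally, I would combine the two observations. If all in-out-degrees were non-negative, then the identity $\sum_v d_D^{\pm}(v)=0$ would force them all to be zero, contradicting the previous step; hence some vertex must have strictly negative in-out-degree. The symmetric argument (all non-positive forces all zero) produces a vertex with strictly positive in-out-degree. This yields both claims simultaneously, and there is no real obstacle: the only subtlety is making explicit that ``not all zero'' is used together with ``sum equals zero'' to conclude the existence of values of both signs.
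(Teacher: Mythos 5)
Your proof is correct and follows essentially the same argument as the paper: both use the identity $\sum_v d_D^{\pm}(v)=0$ (equivalently, that the total in-degree equals the total out-degree) together with the observation that the existence of an edge and the in-out-proper condition forbid all in-out-degrees from vanishing, so values of both signs must occur.
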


\begin{proof}
{
Let $G$ be a graph with at least one edge and assume that  $D$ is an in-out-proper orientation of $G$. First, we show that in $D$ there is a vertex with positive in-out-degree. To the contrary assume that the in-out-degree of each vertex is negative or zero. So, we have
\begin{equation}\label{EE1}
 \sum_{v\in V(G)} d_D^{\pm}(v) \leq 0.   
\end{equation}
On the other hand, we have
\begin{equation}\label{EE2}
 \sum_{v\in V(G)} d_D^{-}(v) = \sum_{v\in V(G)} d_D^{+}(v). 
\end{equation}
Thus, by (\ref{EE1}) and (\ref{EE2}), we conclude that for every vertex $v$ we have $d_D^{\pm}(v)=0$. The graph $G$ has at least one edge, but in $D$ the in-out-degrees of all vertices are zero (so, it is not a proper vertex coloring). Thus $D$ is not an in-out-proper orientation for $G$. This is a contradiction. So,  there is a vertex with positive in-out-degree. Similarly,  we can show that there is a vertex with negative in-out-degree.
}
\end{proof}

\section{Trees}
\label{S4}

Next,  we study the in-out-proper orientation number of trees and show that for every tree $T$ we have $\overleftrightarrow{\chi}(T) \leq 3 $.
Also, we show that this bound is sharp.

\begin{proof}[Proof of Theorem~\ref{T2}]
First we show that for each tree $T$ we have $\overleftrightarrow{\chi}(T) \leq 3 $. Let $T$ be a tree with $n$ vertices and $v$ be a vertex of $T$.
Sort the vertices of $T$ according to their distance from $v$ and let $v=v_1,v_2,\ldots,v_n$ be  that sorted set. For each vertex $u$, 
the father of $u$, denoted by $f(u)$, is the unique vertex that is adjacent  and closer to the root $v$.
Perform the Algorithm \ref{A2} and call the resultant orientation $D$.

\begin{algorithm}
\caption{}\label{A2}
\small
\begin{algorithmic}[1]
\For{$i=1$ to $n$}
    \If{$i=1$}
       \State{Orient the edges incident with $v_1$ such that if $d(v_1)$ is an even number then $d_D^{\pm}(v_1)=2$, and if $d(v_1)$ is an odd number then $d_D^{\pm}(v_1)=1$.}
    \ElsIf{$distance(v_i,v_1)$ is an even number}
       \If{the edge $v_i f(v_i)$ was oriented from $f(v_i)$ to $v_i$}
           \State{Orient the set of edges $\{v_iv_j | j> i\}$
           such that if $d(v_i)$ is an odd number then $d_D^{\pm}(v_i)=1$, and if $d(v_i)$ is an even number then $d_D^{\pm}(v_i)=2$.}
        \ElsIf{the edge $v_i f(v_i)$ was oriented from $v_i$ to $f(v_i)$}
           \If{$d(v_i) \geq 3$}
             \State{Orient the set of edges $\{v_iv_j | j> i\}$
             such that  $d_D^{\pm}(v_i)\in \{1,2\}$}
           \ElsIf{ $d(v_i)=2$}
              \If{$d_D^{\pm}(f(v_i))\neq 0 $}
                \State{Orient the  edge $\{v_iv_j | j> i\}$
             such that  $d_D^{\pm}(v_i)=0$}
              \ElsIf{$d_D^{\pm}(f(v_i))=0 $}
                \State{Orient the set of edges incident with $v_i$ such that
                $d_D^{\pm}(v_i)=2$ (note that we reorient the edge $v_i f(v_i)$.}
              \EndIf
           \ElsIf{ $d(v_i)=1$}
              \If{$d_D^{\pm}(f(v_i))= -1$}
                \State{Reorient the edge $v_if(v_i)$ from $f(v_i)$ to $v_i$}
              \EndIf
           \EndIf
        \EndIf
    \ElsIf{$distance(v_i,v_1)$ is an odd number}
       \If{the edge $v_i f(v_i)$ was oriented from $v_i$ to $f(v_i)$}
           \State{Orient the set of edges $\{v_iv_j | j> i\}$
           such that if $d(v_i)$ is an odd number then $d_D^{\pm}(v_i)=-1$, and if $d(v_i)$ is an even number then $d_D^{\pm}(v_i)=-2$.}
        \ElsIf{the edge $v_i f(v_i)$ was oriented from $f(v_i)$ to $v_i$}
           \If{$d(v_i) \geq 3$}
             \State{Orient the set of edges $\{v_iv_j | j> i\}$
             such that  $d_D^{\pm}(v_i)\in \{-1,-2\}$}
           \ElsIf{ $d(v_i)=2$}
              \If{$d_D^{\pm}(f(v_i))\neq 0 $}
                \State{Orient the  edge $\{v_iv_j | j> i\}$
             such that  $d_D^{\pm}(v_i)=0$}
              \ElsIf{$d_D^{\pm}(f(v_i))=0 $}
                \State{Orient the set of edges incident with $v_i$ such that
                $d_D^{\pm}(v_i)=-2$ (note that we reorient the edge $v_i f(v_i)$.}
              \EndIf
           \ElsIf{$d(v_i)=1$}
              \If{$d_D^{\pm}(f(v_i))= 1$}
                \State{Reorient the edge $v_if(v_i)$ from $v_i$ to $f(v_i)$}
              \EndIf
           \EndIf
        \EndIf
    \EndIf
\EndFor
\end{algorithmic}
\end{algorithm}

We have the following properties for the orientation $D$ that we obtained from Algorithm \ref{A2}.

\begin{prop} \label{L1}
Let $u$ be a vertex with $d(u)\geq 3 $. If 
$u$ has an even distance from the root $v_1$, then  
 $d_D^{\pm}(u)\in \{1,2,3\}$. Also, if $u$ has an odd distance from the root $v_1$, then  
 $d_D^{\pm}(u)\in \{-1,-2,-3\}$.   
\end{prop}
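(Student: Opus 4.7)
The plan is to trace Algorithm~\ref{A2} and verify that, once a high-degree vertex $u$ is processed, the subsequent steps only push $|d_D^{\pm}(u)|$ from $1$ to $3$ via a single leaf child, never cross zero, and never exceed three in absolute value. I would begin by inspecting the value assigned at the moment $u$ itself is processed, which I will call its \emph{baseline}. For $u = v_1$ with $d(v_1) \geq 3$, line 3 of Algorithm~\ref{A2} gives $d_D^{\pm}(v_1) \in \{1,2\}$ directly. For $u \neq v_1$ at even distance with $d(u) \geq 3$, exactly two branches of the nested conditional are reachable --- lines 5--6 (when edge $uf(u)$ already points toward $u$) and line 9 (when it points away from $u$ and $d(u) \geq 3$) --- and both set $d_D^{\pm}(u) \in \{1,2\}$. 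By the mirror structure of the algorithm, a high-degree vertex at odd distance receives baseline $d_D^{\pm}(u) \in \{-1,-2\}$.

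Next, I would catalogue every later step that can modify $d_D^{\pm}(u)$. Inspecting the algorithm, the only such modifications are re-orientations of an edge $uv_j$ triggered when a child $v_j$ of $u$ is processed. These come in two flavours: (i) a leaf child re-orients the edge $uv_j$ in lines 19--21 or 38--40; (ii) a degree-$2$ child re-orients the edge $uv_j$ inside the ``reorient the set of edges incident with $v_i$'' clauses of lines 13--15 or 32--34. Flavour (i) is gated on $d_D^{\pm}(u) \in \{-1,+1\}$ and shifts $d_D^{\pm}(u)$ by $+2$ (even-distance case) or $-2$ (odd-distance case); flavour (ii) is gated on $d_D^{\pm}(u) = 0$.

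Third, I would show that flavour (ii) never fires for our $u$ and that flavour (i) fires at most once. Since the baseline lies in $\{-2,-1,1,2\}$ and each admissible update shifts $d_D^{\pm}(u)$ by $\pm 2$, a short induction on the processing order keeps $d_D^{\pm}(u) \in \{\pm 1, \pm 2, \pm 3\}$ with fixed sign matching the parity of the distance from $v_1$; in particular, $d_D^{\pm}(u)$ never reaches $0$, so the gate of flavour (ii) is always false. For flavour (i), the gate requires $d_D^{\pm}(u) \in \{+1,-1\}$; after a single firing the value becomes $\pm 3$, and neither $+1$ nor $-1$ is attained again, so no further firings are possible. Combining these observations yields $d_D^{\pm}(u) \in \{1,2,3\}$ when $u$ is at even distance and $d_D^{\pm}(u) \in \{-1,-2,-3\}$ when $u$ is at odd distance, as claimed.

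The main obstacle I expect is the case split itself: the algorithm has many nested branches, and one must verify for each reorientation clause that the sign invariant is preserved and that a cascade of sibling flips, processed in an arbitrary order at the same distance, cannot drive $d_D^{\pm}(u)$ through $0$ or past $\pm 3$. Once the sign invariant is pinned down, however, the gates on the reorientation clauses make the remaining bookkeeping routine.
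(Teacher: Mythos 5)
Your proposal is correct and follows essentially the same route as the paper's own proof: establish the baseline value in $\{1,2\}$ (resp.\ $\{-1,-2\}$) at the moment $u$ is processed, then observe that the only subsequent change is a single leaf-child reorientation gated on $d_D^{\pm}(u)=\pm 1$, which shifts the value to $\pm 3$. Your explicit argument that the degree-$2$-child reorientation clause (gated on $d_D^{\pm}(f(v_i))=0$) can never fire for a vertex of degree at least three is a point the paper leaves implicit, but the overall approach is the same.
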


\begin{subproof}{
Let $u$ be a vertex with $d(u)\geq 3 $. If $u$ has an even distance from the root $v_1$, then 
at  Lines 2-3, 5-6, and 8-9, we orient the edges incident with $u$ such that the in-out-degree of $u$ is in $\{1,2,3\}$. There is only one other part of the algorithm that we may change the in-out-degree of $u$.
That part is Lines 35-36. In that case the in-out-degree of $u$ is one and by reorienting one of the edges that is incident with $u$ we  increase the  in-out-degree of $u$ by two. Similarly, if $u$ has an odd distance from the root $v$, then 
at  Lines  23-24, 26-27, and 17-18,  we orient the edges incident with $u$ such that the in-out-degree of $u$ is in $\{-1,-2,-3\}$.
}\end{subproof}

\begin{prop}\label{L2}
Let $u$ be a vertex with $d(u)=2 $. If 
$u$ has an even distance from the root $v_1$, then  
 $d_D^{\pm}(u)\in \{0,1,2\}$. Also, if $u$ has an odd distance from the root $v_1$, then  
 $d_D^{\pm}(u)\in \{0,-1,-2\}$.   
\end{prop}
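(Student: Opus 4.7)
The plan is to track $d_D^\pm(u)$ across the run of Algorithm~\ref{A2}, first pinning down its value just after $u$ itself is processed and then arguing that no later step can push it outside the claimed window. I will repeatedly use the elementary parity observation that, because $d(u)=2$, every value ever taken by $d_D^\pm(u)$ lies in $\{-2,0,2\}$, so the task reduces to fixing the sign of this value.

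Consider first $u$ at even distance with $d(u)=2$. If $u=v_1$ the root-handling step sets $d_D^\pm(u)=2$. Otherwise, when $u$ is reached, the edge $uf(u)$ has already been oriented during the processing of $f(u)$. If that edge points from $f(u)$ to $u$, the even-distance branch orients the remaining edge at $u$ so that $d_D^\pm(u)=2$. If instead it points from $u$ to $f(u)$, the $d(v_i)=2$ subcase is entered and yields either $d_D^\pm(u)=0$ (when $d_D^\pm(f(u))\neq 0$) or $d_D^\pm(u)=2$ (when $d_D^\pm(f(u))=0$, in which case $uf(u)$ is reoriented so that both edges at $u$ become incoming). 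In every subcase we have $d_D^\pm(u)\in\{0,2\}$ immediately after $u$ is processed.

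Next I would argue that the only operation that can subsequently alter $d_D^\pm(u)$ is a reorientation of some edge $uc$ performed by a child $c$ of $u$: every reorientation instruction in Algorithm~\ref{A2} is executed by a vertex $v_i$ on its edge to its immediate father $f(v_i)$, so $u$'s in-out-degree can be touched only through $u$'s own children. Since $d(u)=2$ there is at most one such child (two if $u=v_1$), and it sits at odd distance. Scanning the odd-distance branches, a reorientation of $uc$ fires only in the subcase $d(c)=2$ with $d_D^\pm(u)=0$ (flipping $uc$ and sending $d_D^\pm(u)$ from $0$ to $2$), or in the subcase $d(c)=1$ with $d_D^\pm(u)=1$, a guard that can never be satisfied from the set $\{0,2\}$. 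Consequently $d_D^\pm(u)$ remains in $\{0,2\}\subset\{0,1,2\}$ throughout the execution.

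The case of $u$ at odd distance with $d(u)=2$ is entirely symmetric: immediately after $u$ is processed $d_D^\pm(u)\in\{0,-2\}$, and the only later modifier is an even-distance child whose reorientation fires only from value $0$ and pushes $d_D^\pm(u)$ to $-2$, while the analogous $d(c)=1$ guard requires value $-1$ and is never met. I expect the sole delicate point of the whole argument to be the bookkeeping claim that no operation other than a child's reorientation of $uc$ can ever alter $d_D^\pm(u)$; once this is verified, the case analysis above delivers the proposition immediately.
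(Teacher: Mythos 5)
Your proof is correct and follows essentially the same route as the paper's: pin down $d_D^{\pm}(u)\in\{0,2\}$ (resp.\ $\{0,-2\}$) at the moment $u$ is processed, observe that only a child's reorientation of its father-edge can later touch $u$, and check that the degree-two reorientation sends $0$ to $2$ (resp.\ $-2$) while the degree-one reorientation's guard $d_D^{\pm}(u)=\pm 1$ is impossible by parity. Your write-up is in fact slightly more explicit than the paper's on the last two points.
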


\begin{subproof}{
Let $u$ be a vertex with $d(u)=2 $. If $u$ has an even distance from the root $v_1$, then 
at  Lines 2-3, 5-6, and 10-14, we orient the edges incident with $u$ such that the in-out-degree of $u$ is in $\{0,1,2\}$. There is only one other part of the algorithm that we may change the in-out-degree of $u$.
That part is Lines 31-32. In that case the in-out-degree of $u$ is zero and by reorienting one of the edges that is incident with $u$ we  increase the  in-out-degree of $u$ by two. So, the final in-out-degree of $u$ is in $\{0,1,2\}$. Similarly, if $u$ has an odd distance from the root $v_1$, then 
at  Lines  23-24, 28-32, and 13-14,  we orient the edges incident with $u$ such that the in-out-degree of $u$ is in $\{0,-1,-2\}$.
}
\end{subproof}

\begin{prop}\label{L3}
Let $u$ and $u'$ be two adjacent vertices such that $d(u)=d(u')=2$. Then  $d_D^{\pm}(u) \neq d_D^{\pm}(u')$.
\end{prop}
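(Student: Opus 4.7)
The plan is to combine Proposition~\ref{L2} with a short trace through Algorithm~\ref{A2}. Since $u$ and $u'$ are adjacent in the tree $T$, their distances to the root $v_1$ differ by~$1$, so one of them sits at even distance from $v_1$ and the other at odd distance. Because each in-out-degree has the same parity as the underlying vertex degree, Proposition~\ref{L2} in fact restricts the even-distance endpoint to $\{0,2\}$ and the odd-distance endpoint to $\{0,-2\}$, so the only way the two can coincide is at $0$. The task therefore reduces to ruling out the configuration in which both in-out-degrees equal~$0$.

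Without loss of generality I would take $u'$ to be the father of $u$ (so $u'$ is processed first) and assume $u'$ lies at even distance; the other configuration is symmetric. The central step is the following invariant: whichever branch of Algorithm~\ref{A2} is taken at $u'$, after $u'$ is processed the edge $u'u$ is directed from $u$ to $u'$. This is checked by inspecting each sub-case. At the root, Lines~2--3 give $d_D^{\pm}(u')=2$, so both edges are inward. If $u'f(u')$ already points inward to $u'$, then Lines~5--6 again set $d_D^{\pm}(u')=2$. If $u'f(u')$ points outward, then Lines~10--14 either set $d_D^{\pm}(u')=0$ (which, combined with the outward edge toward $f(u')$, forces $u'u$ to be inward) or reorient $u'f(u')$ and set $d_D^{\pm}(u')=2$ (again forcing both edges inward). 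In every sub-case the edge $u'u$ ends up pointing toward $u'$.

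With this invariant, the processing of $u$ finds its edge $uf(u)=uu'$ directed from $u$ to $u'$, so it enters the branch at Line~22, and Lines~23--24 assign $d_D^{\pm}(u)=-2$ since $d(u)=2$ is even. I would then verify that no later operation modifies $d_D^{\pm}(u)$: the only remaining edge at $u$ goes to its unique child $c$, and once oriented outward from $u$ this edge is inward at $c$, so $c$'s processing falls under Lines~5--6 and never reorients $uc$. Hence $d_D^{\pm}(u)=-2$, which lies outside $\{0,2\}$, so $d_D^{\pm}(u)\neq d_D^{\pm}(u')$.

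The main obstacle is purely bookkeeping: proving the orientation invariant above requires checking each of the nested sub-cases of Algorithm~\ref{A2} at $u'$, and one must also make sure that later operations cannot revisit $u'u$. The latter holds because Lines~31--32 are the only candidate for reorienting $u'u$ from $u$'s side, and they require $u'u$ to point from $u'$ to $u$, which our invariant excludes. Once these checks are in place the conclusion follows, and the symmetric case (where $u'$ sits at odd distance) is identical up to signs.
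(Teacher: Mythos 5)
Your proof is correct and follows essentially the same route as the paper: both arguments use Proposition~\ref{L2} together with the parity of the in-out-degree to reduce the claim to ruling out the case $d_D^{\pm}(u)=d_D^{\pm}(u')=0$, and then appeal to the degree-two branches of Algorithm~\ref{A2} (Lines 10--14 and 28--32) to exclude it. Your edge-orientation invariant is just a more explicit, line-by-line justification of the single sentence the paper devotes to this step, and the extra care about later reorientations is a welcome addition rather than a departure.
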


\begin{subproof}{
By Lines 10-14 and Lines 28-32, the algorithm does not produce any two adjacent vertices $u,u'$ such that $d(u)=d(u')=2$ and $d_D^{\pm}(u) = d_D^{\pm}(u')=0$. Thus, by Proposition \ref{L2}, for any two adjacent vertices $u,u'$ with $d(u)=d(u')=2$ we have $d_D^{\pm}(u) \neq d_D^{\pm}(u')$.
}
\end{subproof}

\begin{prop}\label{L4}
Let $u$ be a vertex with $d(u)=1 $. 
Then $ d_D^{\pm}(u)\in \{-1,+1\}$ and $ d_D^{\pm}(u)\neq d_D^{\pm}(f(u))$.
\end{prop}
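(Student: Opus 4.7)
The first assertion is immediate: since $d(u)=1$, the unique edge incident with $u$ contributes $+1$ to $d_D^{\pm}(u)$ if it points toward $u$ and $-1$ if it points away, forcing $d_D^{\pm}(u)\in\{-1,+1\}$.

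For the second assertion, my plan is a case analysis on the parity of $\mathrm{distance}(u,v_1)$. I will do the even-distance case in detail; the odd-distance case is symmetric, using Lines 35--37 in place of Lines 17--19 and reversing signs throughout. Assume $u$ has even distance. Then $f(u)$ has odd distance, and I may assume $d(f(u))\ge 2$ (if $d(f(u))=1$ then $G=K_2$, and the claim follows from a direct inspection of Line 3). By Propositions \ref{L1} and \ref{L2} applied at $f(u)$, right after $f(u)$ is processed we have $d_D^{\pm}(f(u))\in\{0,-1,-2,-3\}$.

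The key intermediate claim I would establish is the following self-limiting property of Algorithm \ref{A2}: after $f(u)$ itself has been processed, the only subsequent modification of $d_D^{\pm}(f(u))$ comes from Lines 17--19 triggered while processing a leaf child of $f(u)$, and those lines fire at most once at $f(u)$. Indeed, (i) the test at Line 18 requires $d_D^{\pm}(f(u))=-1$, and (ii) the reorientation at Line 19 flips the contribution of one edge at $f(u)$ from $+1$ to $-1$, dropping $d_D^{\pm}(f(u))$ by exactly $2$, from $-1$ to $-3$, where it can never again equal $-1$. Consequently the final value of $d_D^{\pm}(f(u))$ also lies in $\{0,-1,-2,-3\}$.

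The inequality now follows from the two possibilities for the final orientation of the edge $uf(u)$. If this edge points $f(u)\to u$ at termination, then $d_D^{\pm}(u)=+1$, which cannot equal the non-positive value $d_D^{\pm}(f(u))$. If instead it points $u\to f(u)$ at termination, then it must already have had this orientation when $u$ was processed (Line 19 is the only leaf-step that could flip the edge, and it flips it the other way), so at that instant the test at Line 18 failed and $d_D^{\pm}(f(u))\neq -1$. By the self-limiting property, $d_D^{\pm}(f(u))$ stays different from $-1$ thereafter, and again $d_D^{\pm}(u)=-1\neq d_D^{\pm}(f(u))$. The only mildly delicate point is recognizing the self-limiting invariant — that Line 19 can only fire from the value $-1$ and then immediately pushes $d_D^{\pm}(f(u))$ away from $-1$; after that observation, the rest is routine verification against the pseudocode.
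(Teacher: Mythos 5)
Your proof is correct and follows essentially the same route as the paper's: split on the parity of $distance(u,v_1)$, use Propositions \ref{L1} and \ref{L2} to confine $d_D^{\pm}(f(u))$ to $\{0,-1,-2,-3\}$ (when $u$ is at even distance), and let the leaf-reorientation lines of Algorithm \ref{A2} break the only possible collision at $-1$. You are in fact more careful than the paper, which does not explicitly address the point that $d_D^{\pm}(f(u))$ may still change after $u$ has been processed. One small inaccuracy in your ``self-limiting'' claim: Lines 17--19 are not the only later modification of $d_D^{\pm}(f(u))$. A degree-two child of $f(u)$ (also at even distance) can flip its edge to $f(u)$ via Lines 13--14; that step requires $d_D^{\pm}(f(u))=0$ at that moment and drops the value to $-2$. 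This omission is harmless: the extra mechanism also keeps $d_D^{\pm}(f(u))$ inside $\{0,-1,-2,-3\}$ and, like Lines 17--19, changes the value by $-2$ and therefore cannot create the value $-1$ from a non-$(-1)$ value, so both branches of your final case analysis go through once this case is added to the enumeration.
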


\begin{subproof}{
Let $u$ be a vertex with $d(u)=1 $. If $u$ has an even distance from the root $v_1$, then 
at  Lines 2-3, 5-6 and 16-19, we orient the edge incident with $u$ such that the in-out-degree of $u$ is in $\{-1,+1\}$ and also  if it is $-1$ then $ d_D^{\pm}(u)\neq d_D^{\pm}(f(u))$. By Propositions \ref{L1}, \ref{L2}, we also conclude that  if the in-out-degree of $u$ is $1$, then $ d_D^{\pm}(u)\neq d_D^{\pm}(f(u))$.
Similarly, if $u$ has an odd distance from the root $v_1$, then 
at  Lines  23-24, and 34-37,  we orient the edge incident with $u$ such that the in-out-degree of $u$ is in $\{-1,+1\}$ and if it is $1$, then $ d_D^{\pm}(u)\neq d_D^{\pm}(f(u))$. By Propositions \ref{L1}, \ref{L2}, we  conclude that  if the in-out-degree of $u$ is $-1$, then $ d_D^{\pm}(u)\neq d_D^{\pm}(f(u))$.
This completes the proof.
}\end{subproof}

By Propositions \ref{L1}, \ref{L2}, \ref{L3} and \ref{L4}, for every vertex $u$, we have $ d_D^{\pm}(u)\in \{\pm 3, \pm 2, \pm 1 ,0\}$ and for every two   adjacent vertices $u,u'$, we have $d_D^{\pm}(u) \neq d_D^{\pm}(u')$. Thus $D$ is an in-out-proper orientation such that the maximum  of absolute values of their in-out-degrees is at most three. 

Finally, we show that there is a tree $T$ such that $\overleftrightarrow{\chi}(T) = 3  $. Consider the tree $T$ with the set of vertices $v_1,v_2,v_3,v_4$ and set of edges $v_1v_2,v_1v_3, v_1v_4$. We have $d(v_2)=d(v_3)=d(v_4)=1$, so in any orientation of $T$, their in-out-degrees are in $\{\pm 1\}$. On the other hand, the degree of $v_1$ is three, so its in-out-degree is in $\{\pm 1, \pm 3\}$. To the contrary assume that $\overleftrightarrow{\chi}(T) < 3  $, and let $D$ be an in-out-proper orientation of $T$ such that $d_D^{\pm }(v_1)\in \{\pm 1\}$. The in-out-degree of at least one of the vertices $v_2,v_3,v_4$ is $1$ (otherwise $d_D^{\pm }(v_1)\notin \{\pm 1\}$)  and also the in-out-degree of at least one of the vertices $v_2,v_3,v_4$ is $-1$. So, $D$ has two adjacent vertices with the same in-out-degree Thus, it is not an in-out-proper orientation. This is a contradiction. So, we conclude that $\overleftrightarrow{\chi}(T) = 3  $.
\end{proof}

\section{Subcubic graphs}
\label{S5}

In this section we focus on subcubic graphs. 
Let $G$ be a subcubic graph. By Theorem \ref{T1}, we have $ \overleftrightarrow{\chi}(G) \leq 3$. Next,
we show that there is a polynomial time algorithm to determine whether $\overleftrightarrow{\chi}(G) \leq 2$. On the other hand, it is NP-complete to decide whether $\overleftrightarrow{\chi}(G) \leq 1$ for a given bipartite graph $G$ with maximum degree three.

\begin{proof}[Proof of Theorem~\ref{T3}]
Let $G$ be a cubic graph. If $D$ is an optimal in-out-proper orientation, then for any vertex $v$ of degree two we have $d_D^{\pm}(v)\in \{0,\pm 2\}$ and also for any vertex $v$ of degree  one or three we have $d_D^{\pm}(v)\in \{\pm 1\}$. 
First, we investigate the subcubic graphs without degree two vertices. Then, we present a polynomial time algorithm for subcubic graphs.
Let $G$ be a graph such that the degree of each vertex is one or three and without loss of generality assume that $G$ is connected. Also, suppose that $\overleftrightarrow{\chi}(G) \leq 2$ and $D$ is an optimal in-out-proper orientation of $G$.
Since $d_D^{\pm}(v)\in \{\pm 1\}$ for each vertex $v$ and the in-out-degrees form a proper vertex coloring of $G$, $G$ should be bipartite.

\begin{prop}\label{P1}
Let $G$ be a graph such that the degree of each vertex is one or three. If 
$\overleftrightarrow{\chi}(G) \leq 2$, then $G$ is bipartite.
\end{prop}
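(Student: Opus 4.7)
The plan is to observe that the constraint $\overleftrightarrow{\chi}(G) \leq 2$ forces every in-out-degree to be $\pm 1$, which immediately yields a proper $2$-coloring. Concretely, fix an in-out-proper orientation $D$ of $G$ with $\max_v |d_D^{\pm}(v)| \leq 2$.

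First I would note a parity observation: for any vertex $v$, $d_D^-(v) + d_D^+(v) = d(v)$ and $d_D^{\pm}(v) = d_D^-(v) - d_D^+(v)$, so $d_D^{\pm}(v) \equiv d(v) \pmod 2$. Since every vertex of $G$ has odd degree (either $1$ or $3$), $d_D^{\pm}(v)$ is odd for every $v$. Combined with $|d_D^{\pm}(v)| \leq 2$, this forces $d_D^{\pm}(v) \in \{-1, +1\}$ for every vertex (a degree-three vertex cannot attain $\pm 3$, and a degree-one vertex is automatically $\pm 1$).

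Next I would use the fact that $D$ is in-out-proper: adjacent vertices receive different in-out-degrees. Define $f \colon V(G) \to \{-1, +1\}$ by $f(v) = d_D^{\pm}(v)$. Then $f$ assigns each vertex a value in a two-element set and gives distinct values to adjacent vertices, so $f$ is a proper $2$-coloring of $G$. Hence $G$ is bipartite.

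There is essentially no obstacle here, the proposition is a direct consequence of the parity of in-out-degrees together with the bound $\overleftrightarrow{\chi}(G) \leq 2$; the only subtlety is recording that a degree-three vertex cannot have in-out-degree $\pm 3$ under the hypothesis, which is exactly what rules out any option besides $\pm 1$.
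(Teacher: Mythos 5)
Your proof is correct and follows essentially the same route as the paper: the parity of $d_D^{\pm}(v)$ matches that of $d(v)$, so the bound $|d_D^{\pm}(v)|\leq 2$ forces $d_D^{\pm}(v)\in\{\pm 1\}$ at every odd-degree vertex, and the in-out-properness then yields a proper $2$-coloring, hence bipartiteness. Nothing is missing.
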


Consequently, at step one we should check that whether $G$ is bipartite. Next, at step two we want to determine whether it is possible to orient the edges of $G$ such that in-out-degrees of vertices of one partite set of $G$ are $1$ and  in-out-degrees of  vertices of the other partite set of $G$ are $-1$.

\begin{prop}\label{P2}
Let $G=(X\cup Y,E)$ be a bipartite graph such that the degree of each vertex is one or three. If 
$\overleftrightarrow{\chi}(G) \leq 2$, then there is an orientation for the edges of $G$ such that the in-out-degree of each vertex is $1$ or $-1$, and the in-out-degrees of all vertices in $X$ are the same, and so are those in $Y$.
\end{prop}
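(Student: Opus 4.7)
The plan is to leverage the parity of vertex degrees together with the bound on $|d_D^{\pm}|$, and then realign per connected component. Starting from the hypothesis $\overleftrightarrow{\chi}(G) \leq 2$, I would first fix any in-out-proper orientation $D_0$ of $G$ with $|d_{D_0}^{\pm}(v)| \leq 2$ for every $v \in V(G)$.

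The first key step is a parity argument: for every vertex $v$ and every orientation $D$, $d_D^{-}(v) + d_D^{+}(v) = d_G(v)$, so $d_D^{\pm}(v) = d_D^{-}(v) - d_D^{+}(v)$ has the same parity as $d_G(v)$. Because every vertex of $G$ has degree $1$ or $3$ (both odd), $d_{D_0}^{\pm}(v)$ is odd for every vertex $v$. Combined with $|d_{D_0}^{\pm}(v)| \leq 2$, this forces $d_{D_0}^{\pm}(v) \in \{-1, +1\}$ everywhere, already giving the first half of the conclusion.

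Since $D_0$ is in-out-proper, any two adjacent vertices carry distinct in-out-degrees, hence opposite signs in $\{-1, +1\}$. Partitioning vertices by the sign of $d_{D_0}^{\pm}$ therefore yields a proper $2$-coloring of $G$. Within every connected component $C$ the bipartition is unique, so this $2$-coloring must coincide with the bipartition $(X \cap C, Y \cap C)$: either $d_{D_0}^{\pm}(x) = +1$ for all $x \in X \cap C$ and $d_{D_0}^{\pm}(y) = -1$ for all $y \in Y \cap C$, or the reverse holds throughout $C$.

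To finish, I would realign the components: for every connected component $C$ in which $X \cap C$ received $-1$ under $D_0$, reverse the direction of every arc of $C$ to obtain a new orientation $D$. This operation negates $d^{\pm}(v)$ for every $v \in C$, swapping $+1$'s and $-1$'s there, while still keeping endpoints of each edge different (and arcs outside $C$ are untouched, so edges across components are unaffected). The resulting $D$ is still in-out-proper, and it satisfies $d_{D}^{\pm}(x) = +1$ for every $x \in X$ and $d_{D}^{\pm}(y) = -1$ for every $y \in Y$, which is exactly what the proposition requires. The only conceptual ingredient is the parity observation; the component-by-component realignment is routine bookkeeping and does not constitute a real obstacle.
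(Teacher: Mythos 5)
Your proof is correct and follows essentially the same route as the paper: the parity of the (odd) degrees combined with $|d^{\pm}(v)|\leq 2$ forces $d^{\pm}(v)\in\{\pm 1\}$, and the sign pattern then realizes the bipartition on each component. You are in fact somewhat more careful than the paper, which reduces to the connected case without spelling out the per-component arc-reversal that you make explicit.
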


It is well-known that there is a polynomial time algorithm to decide whether a given graph is bipartite \cite{MR1367739}. Next, we present a polynomial time algorithm for step two. Let $G=(X\cup Y,E(G))$ be a bipartite graph such that the degree of each vertex is one or three. Without loss of generality assume that $X={x_1,x_2,\ldots,x_{n}}$ and $Y={y_1,y_2,\ldots,y_{n'}}$. From the graph $G$ we construct a bipartite graph $H$ with vertex set $V (H) = (U_X\cup  U_Y )\cup U_0$ and edge set $E(H) = W$. Put
$U_{X}=X$, and 
$U_{Y}=Y$. Also, for every edge $x_i y_j\in E(G)$, put $x_{i,j} $ in $U_0$ and the edges $w_{i,j}=x_ix_{i,j},w_{i,j}'=x_{i,j}y_j $ in $W$.
See Fig. \ref{PP01}.

\begin{figure}[]
\centering
\includegraphics [width=0.5\textwidth]{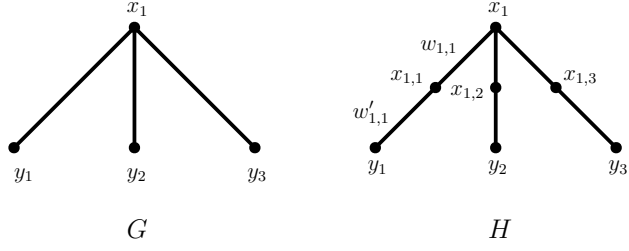}
\caption{The graph $G$ and its corresponding graph $H$.}
\label{PP01}
\end{figure}

Consider the following integer linear   program for the graph $H$.

\begin{alignat}{7}
& \text{Maximize}  &\quad& 1                           &\quad&                                        &\nonumber \\
& \text{subject to}& &\sum_{x_i y_j\in E(G) } w_{i,j}=1&     & \forall x_i\in U_X\text{ s.t. } d_G(x_i)=1& \label{L6}\\
&                  & &\sum_{x_i y_j\in E(G) } w_{i,j}=2& & \forall x_i\in U_X\text{ s.t. } d_G(x_i)=3& \label{L7}\\
&                  & &\sum_{x_i y_j\in E(G) } w_{i,j}'=0& & \forall y_j\in U_Y\text{ s.t. } d_G(y_j)=1&  \label{L8}\\
&                  & &\sum_{x_i y_j\in E(G) } w_{i,j}'=1& & \forall y_j\in U_Y\text{ s.t. } d_G(y_j)=3& \label{L9}\\
&                  & & w_{i,j}+w_{i,j}'=1               & & \forall x_{i,j}\in U_0                     & \label{L10}\\
&                  & & w_{i,j},w_{i,j}'\in \{0,1\}      & & \forall x_i y_j\in E(G)                     & \label{L11}
\end{alignat}
\\
Note that we can write the above integer linear  program in the following canonical form:

\begin{alignat}{3}
& \text{Maximize}  &\quad& 1               &\nonumber \\
& \text{subject to}&     &A{\bf x}={\bf b} & \label{LL0}\\
&                  &.    &{\bf x}\in \{0,1\}^{|E(H)|}, &  \label{LLL}
\end{alignat}
\\
where  $A$ is the incidence matrix of $H$, ${\bf x}^T=(w_{i_1,j_1}, \ldots, w_{i_k,j_k}')$, and ${\bf b}\in \{0,1,2\}^{|V(H)|}$. For instance, for the graph $H$ that was shown in Fig. \ref{PP01}, we have ${\bf x}^T=(w_{1,1}, w_{1,1}',w_{1,2}, w_{1,2}',w_{1,3}, w_{1,3}')$, ${\bf b}=(2,0,0,0,1,1,1)$ and
$A$ is
\[
\begin{blockarray}{ccccccc}
&w_{1,1} & w_{1,1}' & w_{1,2} & w_{1,2}' & w_{1,3} & w_{1,3}' \\
\begin{block}{c(cccccc)}
  x_1     & 1 & 0 & 1 & 0 & 1 & 0 \\
  y_1     & 0 & 1 & 0 & 0 & 0 & 0 \\
  y_2     & 0 & 0 & 0 & 1 & 0 & 0 \\
  y_3     & 0 & 0 & 0 & 0 & 0 & 1 \\
  x_{1,1} & 1 & 1 & 0 & 0 & 0 & 0 \\
  x_{1,2} & 0 & 0 & 1 & 1 & 0 & 0 \\
  x_{1,3} & 0 & 0 & 0 & 0 & 1 & 1 \\
\end{block}
\end{blockarray}
 \]

For each edge $x_i y_j\in E(G)  $ in (\ref{L11}), we consider two variables $w_{i,j},w_{i,j}'$ such that $ w_{i,j},w_{i,j}'\in \{0,1\}$. On the other hand, in (\ref{L10}), we have $w_{i,j}+w_{i,j}'=1$, so the value of exactly one of these variables is one and the value of the other variable  is zero. We consider the values of  $w_{i,j},w_{i,j}'$ as an orientation   for the edge $x_i y_j$ in $G$ such that it is oriented from $y_j$ to $x_i$ if and only if $w_{i,j}=1$. So, the values of the variables correspond to an orientation for the graph $G$. Call that orientation $D$. By (\ref{L6}) and (\ref{L7}), we ensure that in $D$ the in-out-degree of each vertex in $X$ is $1$. Also, by (\ref{L8}) and (\ref{L9}), the in-out-degree of each vertex in $Y$ is $-1$. Consequently, the above integer linear program is feasible if and only if the graph $G$  has an in-out-proper orientation such that the in-out-degree of each vertex in $X$ is $1$ and the in-out-degree of each vertex in $Y$ is $-1$.

When an integer linear program has all-integer coefficients and the matrix of coefficients is totally unimodular, then the optimal solution of its relaxation is integral. Therefore, it can be obtained in polynomial time \cite{MR948455}. On the other hand, it is shown in
[\cite{MR948455}, Corollary 2.9 in Page 544]
that every incidence matrix of a bipartite graph is totally unimodular. So, in our integer linear program the matrix of coefficients is totally unimodular. 
Consequently, there is a polynomial time algorithm to determine whether the above-mentioned integer linear program is feasible. 

Note that there is  an orientation of the edges of $G$ such that  the in-out-degree of each vertex in $X$ is $-1$ and the in-out-degree of each vertex in $Y$ is $1$ if and only if there is an orientation of the edges of $G$ such that  the in-out-degree of each vertex in $X$ is $1$ and the in-out-degree of each vertex in $Y$ is $-1$ (by considering the reverse of the given orientation). 
Consequently, there is  a polynomial time algorithm to determine whether the in-out-proper orientation number of given graph $G$ with degree set $\{1,3\}$ is at most two. 

Next, we consider the set of subcubic graphs. Let $G$ be a subcubic graph. If $D$ is an optimal in-out-proper orientation, then for any vertex $v$ of degree two we have $d_D^{\pm}(v)\in \{0,\pm 2\}$ and also for any vertex $v$ of degree  one or three we have $d_D^{\pm}(v)\in \{\pm 1\}$. 

Remove all vertices of degree two from the graph $G$ and call the resultant graph $G'$. For each vertex $v$ in $G'$ if $d_G(v) \neq d_{G'}(v)$, then put $d_G(v) - d_{G'}(v)$ isolated vertices and join them to $v$ (we call these new vertices  dummy vertices). Call the resultant graph $G''$. Note that in $G''$ the degree of each vertex is one or three. See Fig. \ref{PP02}.

\begin{figure}[]
\centering
\includegraphics [width=0.7\textwidth]{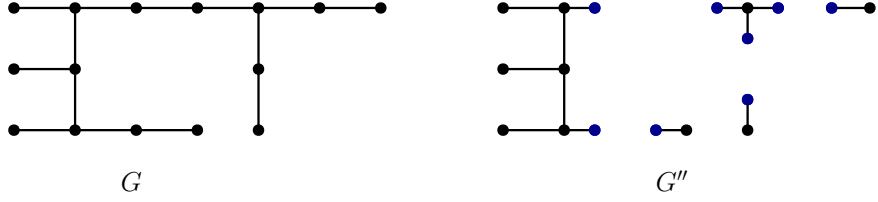}
\caption{The graph $G$ and its corresponding graph $G''$. In the graph $G''$ the degree of each vertex is 1 or 3 and the set of blue vertices are dummy vertices.}
\label{PP02}
\end{figure}

Assume that $\overleftrightarrow{\chi}(G) \leq 2$. next, we present some necessary conditions for $G''$.

\begin{prop}\label{PR1}
Let $C_1,C_2,\ldots,C_k$ be all the connected components of $G''$. For any $i\in \{1,2,\ldots,k\}$, ($C_i$ is
bipartite and) there exists an orientation $D_i$ of $C_i$ satisfying\\
(a) every vertex of $C_i$ has the in-out-degree $1$ or $-1$, and\\
(b) for any $uv\in E(G) \cap E(G'')$, $d_{D_i}^{\pm}(u)\neq d_{D_i}^{\pm}(v)$.

\end{prop}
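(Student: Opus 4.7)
The plan is to construct the required orientation $D_i$ of each connected component $C_i$ of $G''$ by inheritance from a fixed optimal in-out-proper orientation $D$ of $G$; such a $D$ exists by the standing hypothesis $\overleftrightarrow{\chi}(G)\le 2$. Recall that under this hypothesis every vertex $v$ of $G$ with $d_G(v)\in\{1,3\}$ satisfies $d_D^{\pm}(v)\in\{\pm 1\}$, while every vertex with $d_G(v)=2$ has $d_D^{\pm}(v)\in\{0,\pm 2\}$.

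For the bipartiteness of $C_i$, I would note that any cycle of $G''$ must avoid the dummy vertices since they have degree $1$, so such a cycle consists entirely of surviving vertices (each of degree $3$ in $G$) and all of its edges belong to $E(G)\cap E(G'')$. This cycle is therefore also a cycle of $G$ along which the $d_D^{\pm}$-values take only values in $\{\pm 1\}$ with distinct values on consecutive vertices; hence it must have even length, so $C_i$ is bipartite.

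To build $D_i$, I would orient each edge $uv\in E(G)\cap E(G'')$ exactly as $D$ orients it, and for each dummy edge $u\tilde v$ of $C_i$---where $\tilde v$ is the dummy neighbor of $u$ introduced to replace a specific removed degree-two vertex $v$ of $G$ adjacent to $u$---I would copy the direction of the original edge $uv$ from $D$. Because dummies correspond bijectively to removed degree-two neighbors of each surviving vertex $u$, the in- and out-contributions at $u$ in $D_i$ agree with those at $u$ in $D$, so $d_{D_i}^{\pm}(u)=d_D^{\pm}(u)\in\{\pm 1\}$; at a dummy vertex the in-out-degree is trivially $\pm 1$ because it has degree $1$. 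This yields (a); and (b) is immediate, since for any $uv\in E(G)\cap E(G'')$ one has $d_{D_i}^{\pm}(u)=d_D^{\pm}(u)\neq d_D^{\pm}(v)=d_{D_i}^{\pm}(v)$ by the in-out-properness of $D$.

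There is no deep obstacle: the argument is essentially a bookkeeping check that the local balance at each surviving vertex is preserved when the edges to its degree-two neighbors in $G$ are replaced by the corresponding dummy edges in $G''$. The only point with a substantive idea is the bipartiteness of $C_i$, which is handled by the odd-cycle observation above exploiting that $\{\pm 1\}$ is only a $2$-element palette for cycles through degree-three vertices of $G$.
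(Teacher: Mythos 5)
Your proof is correct and takes essentially the same route the paper intends: fix an optimal in-out-proper orientation $D$ of $G$ (using the standing hypothesis $\overleftrightarrow{\chi}(G)\le 2$), read off bipartiteness of each $C_i$ from the fact that $d_D^{\pm}$ is a proper $\{\pm 1\}$-coloring on any cycle of $G''$, and transfer $D$ to $G''$ by giving each dummy edge the direction of the deleted edge it replaces. Your write-up is actually more complete than the paper's one-line appeal to its earlier propositions, since you make explicit the dummy-edge bookkeeping that guarantees $d_{D_i}^{\pm}(u)=d_D^{\pm}(u)$ at every surviving vertex.
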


\begin{subproof}
By Proposition \ref{P1} and Proposition \ref{P2} the proof is clear.
\end{subproof}

In order to complete the proof we do the following steps:\\
Step 1. Proving that the condition in Proposition \ref{PR1} is a necessary and sufficient one for $\overleftrightarrow{\chi}(G) \leq 2$. \\
Step 2. Showing that the condition in Proposition \ref{PR1} can be checked in polynomial time.\\
Step 3. Concluding that Theorem \ref{T3} is true by Step 1 and Step 2.

(Proof of Step 1:) We show that the necessary conditions that are presented in Proposition \ref{PR1} are also sufficient. In other words, we prove that if each connected component of $G''$ is bipartite and also
if the graph $G''$ has an orientation such that in each connected component,  the in-out-degrees of vertices in different parts (of that bipartite component), except  dummy vertices, are different, 
then we can extend that partial orientation to an in-out-proper orientation of $G$ such that the maximum of absolute values of their in-out-degrees is at most two. 
To prove that it is enough, we show the following proposition.

\begin{prop}\label{P3}
Each path  $P_n=v_1,v_2,\ldots,v_n$ of length at least two (i.e. $n\geq 3$) has the following four kinds of in-out-proper orientations:\\
(1) The in-out-proper orientation $D_1$ such that $d_D^{\pm}(v_1)=d_D^{\pm}(v_n)=1$. \\
(2) The in-out-proper orientation $D_2$ such that $d_D^{\pm}(v_1)=d_D^{\pm}(v_n)=-1$. \\
(3) The in-out-proper orientation $D_3$ such that $d_D^{\pm}(v_1)=1$ and $d_D^{\pm}(v_n)=-1$. \\
(4) The in-out-proper orientation $D_4$ such that $d_D^{\pm}(v_1)=-1$ and $d_D^{\pm}(v_n)=1$. 
\end{prop}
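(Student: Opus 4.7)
The plan is to encode each orientation of $P_n$ by a $\pm 1$-valued sequence and translate the in-out-proper condition into an easy combinatorial statement. For $1 \le i \le n-1$, let $x_i = +1$ if the edge $v_i v_{i+1}$ is oriented from $v_i$ to $v_{i+1}$ and $x_i = -1$ otherwise. A direct computation then gives $d_D^{\pm}(v_1) = -x_1$, $d_D^{\pm}(v_n) = x_{n-1}$, and $d_D^{\pm}(v_i) = x_{i-1} - x_i$ for $2 \le i \le n-1$. Because $d_D^{\pm}(v_1), d_D^{\pm}(v_n) \in \{-1, +1\}$ while $d_D^{\pm}(v_2), d_D^{\pm}(v_{n-1}) \in \{-2, 0, +2\}$, the in-out-proper constraints at the two endpoint pairs are automatic, and the only remaining ones, between consecutive internal vertices, reduce to $x_{i-1} - x_i \neq x_i - x_{i+1}$ for $2 \le i \le n-2$. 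Since $x_{i-1}, x_{i+1} \in \{-1, +1\}$, this is equivalent to requiring that no three consecutive entries of $(x_1, \ldots, x_{n-1})$ are all equal.

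Under this encoding the four cases simply prescribe the pair $(x_1, x_{n-1})$: case~(1) requires $(-1, +1)$, case~(2) requires $(+1, -1)$, case~(3) requires $(-1, -1)$, and case~(4) requires $(+1, +1)$. Reversing every edge of $P_n$ negates every $x_i$ and therefore every $d_D^{\pm}$, so case~(2) is obtained from case~(1), and case~(4) from case~(3), by symmetry; it suffices to construct the orientations required in cases~(1) and~(3).

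For $n = 3$ the sequence has only two entries, so any choice $(x_1, x_2) = (a, b)$ trivially avoids three-in-a-row. For $n \ge 4$, the plan is to start from the strictly alternating sequence beginning with $x_1$; this has no two consecutive equal entries (let alone three) and its last entry $x_{n-1}$ is pinned by the parity of $n-1$. If that parity already gives the desired $x_{n-1}$ there is nothing more to do; otherwise I would overwrite $x_{n-1}$ by its negation. The only triple whose content changes is $(x_{n-3}, x_{n-2}, x_{n-1})$, which was of the form $(c, -c, c)$ in the alternating pattern and becomes $(c, -c, -c)$, still not three equal entries, so the modified sequence continues to satisfy the ``no three consecutive equal'' condition and still achieves the prescribed endpoints.

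The heart of the argument is the algebraic reformulation: once in-out-properness on a path has been recast as ``no three consecutive equal $\pm 1$'s'', the construction reduces to an elementary manipulation of an alternating sequence. The main obstacle is therefore not mathematical but notational, namely bookkeeping across the four endpoint specifications and the two parities of $n-1$; nothing subtler arises.
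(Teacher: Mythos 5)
Your proof is correct. The encoding $x_i=\pm 1$ with $d_D^{\pm}(v_1)=-x_1$, $d_D^{\pm}(v_n)=x_{n-1}$, $d_D^{\pm}(v_i)=x_{i-1}-x_i$ is right, and the key equivalence is sound: $x_{i-1}-x_i=x_i-x_{i+1}$ forces $x_{i-1}+x_{i+1}=2x_i$, which for $\pm1$ values happens exactly when $x_{i-1}=x_i=x_{i+1}$, so in-out-properness on the path is exactly ``no three consecutive equal entries.'' The near-alternating sequence with at most one flip at the final entry then settles all four cases (via the reversal symmetry), and your check that the only affected triple $(c,-c,c)\mapsto(c,-c,-c)$ stays legal is the right one. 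This is a genuinely different route from the paper's: the paper first orients the two end edges to pin the endpoint in-out-degrees and then runs a left-to-right greedy rule on the interior edges, arguing afterwards that the result has no two consecutive vertices of in-out-degree $0$ and cannot have two consecutive of in-out-degree $\pm2$. Your algebraic reformulation buys a closed-form orientation and makes correctness a one-line parity observation, whereas the paper's greedy construction requires verifying an invariant along the sweep (and its correctness claims are stated rather than fully checked); both are elementary and constructive, but yours is the more transparent of the two.
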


\begin{subproof}
{
Let $P_n=v_1,v_2,\ldots,v_n$ be a path of length at least two and $D\in \{D_1,D_2,D_3,D_4\}$. 
Orient the edges $v_1v_2$ and $v_{n-1}v_n$ such that the in-out-degree of $v_1$ is  $d_D^{\pm}(v_1)$ and the in-out-degree of $v_n$ is  $d_D^{\pm}(v_n)$. Do Algorithm \ref{A3} to orient the remaining edges. 

\begin{algorithm}
\caption{}\label{A3}
\small
\begin{algorithmic}[1]
\For{$i=2$ to $n-2$}
   \If{$d_{D}^{\pm}(v_{i-1})=0$  and  $v_{i-1}v_i$ was oriented from  $v_{i-1}$  to $v_i$}
       \State{Orient  $v_i v_{i+1}$ from $v_{i+1}$ to $v_{i}$}
   \Else
       \State{Orient  $v_i v_{i+1}$ from $v_{i}$ to $v_{i+1}$ }
   \EndIf
\EndFor
\end{algorithmic}
\end{algorithm}

By Algorithm \ref{A3}, there is no two consecutive vertices with the in-out-degree $0$. On the other hand, it not possible to have two consecutive vertices with the in-out-degree $2$ or $-2$. Moreover, the in-out-degree of $v_n$ is in $\{\pm 1\}$ and the in-out-degree of $v_{n-1}$ is in $\{0, \pm 2\}$. So $D$ is an in-out-proper orientation. 
}
\end{subproof}

(Proof of Step 2:) To check whether $G''$ has such an orientation we can use the previous mentioned integer linear program with some modifications. In fact, for each dummy vertex $v$ we just remove the corresponding condition in (\ref{L6}) or (\ref{L8}), and then we solve the integer linear program.

(Proof of Step 3:) Having Propositions \ref{P2}, and \ref{P3}, and noting that there is a polynomial time algorithm to check  Proposition \ref{P2}, we conclude that there is a polynomial time algorithm to decide whether in-out-proper orientation number of a given subcubic graph is at most two.   
This completes the proof.
\end{proof}

Next, we prove that it is NP-complete to decide whether $\overleftrightarrow{\chi}(G) \leq 1$ for a given bipartite graph $G$ with maximum degree three. 

\begin{proof}[Proof of Theorem~\ref{T4}]
It was shown in \cite{MR3386014} that the following variant of Not-All-Equal  satisfying assignment problem
is NP-complete.

\smallskip

\noindent \textbf{Problem}: {\sc Cubic Monotone Not-All-Equal (2,3)-Sat}.\\
\textsc{Input}: Set $X$ of variables, collection $C$ of clauses over $X$ such that every clause $c\in C$ has $| c |\in \{2, 3\}$, each variable appears in exactly three clauses and there is no negation in the formula.\\
\textsc{Question}: Is there a truth assignment for $X$ such that every clause in $C$ has at least one true literal and at least one false literal?

\smallskip

Our proof is a polynomial time reduction from {\sc Cubic Monotone Not-All-Equal (2,3)-Sat}. Let $\Phi $ be an instance with the set of  variables
$X $ and the set of clauses $C$. We transform it to a bipartite graph $G_{\Phi}$ with maximum degree three in polynomial time such that $\overleftrightarrow{\chi}(G_{\Phi}) \leq 1$ if and only if $\Phi $ has a  Not-All-Equal truth assignment.
We use the auxiliary gadget $I_x$ which is shown in Fig. \ref{PP03}. Our construction consists of three steps.\\
{\bf Step 1.} For each variable $x\in X$ put a copy of the gadget $I_x$ which is shown in Fig. \ref{PP03}. \\
{\bf Step 2.} For each clause $c\in C$ put a vertex $c$ and then for each variable $x$ that appears in the clause $c$ join the vertex $c$ to one of the vertices $x_1,x_2,x_3$ of $I_x$ such that in the resultant graph for each variable $x\in X$ in the gadget $I_x$ the degrees of the variables $x_1,x_2,x_3$ are two. Call the resultant graph $H_{\Phi}$.\\
{\bf Step 3.} For each clause $c=(x \vee x')\in C$, without loss of generality assume that  $cx_1,cx_1'\in E(H_{\Phi})$. Merge the three vertices $c,x_1,x_1'$ into a new vertex $c'$.

\begin{figure}[]
\centering
\includegraphics [width=0.7\textwidth]{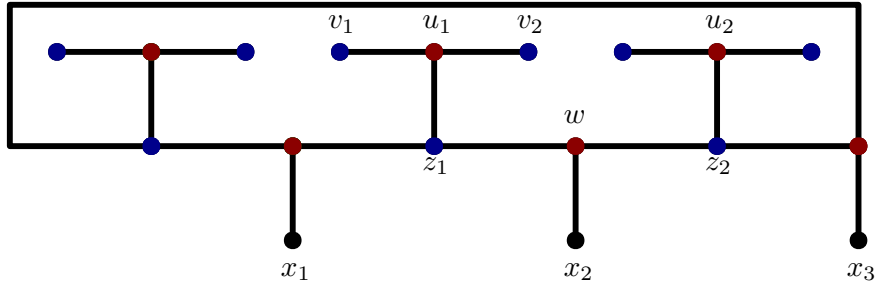}
\caption{The gadget $I_x$.}
\label{PP03}
\end{figure}

Call the resultant graph $G_{\Phi}$. The degree of every vertex in the graph $G_{\Phi}$ is 1,  2 or 3 and the resultant graph is bipartite. Let us now prove that $\overleftrightarrow{\chi}(G_{\Phi}) \leq 1$ if and only if $\Phi$ has a Not-All-Equal truth assignment. 

First, assume that $\overleftrightarrow{\chi}(G_{\Phi}) \leq 1$. We have the following properties. 

\begin{prop}\label{P8}
Consider the gadget $I_x$ which is shown in Fig. \ref{PP03}. Let $D$ be an orientation of $I_x$ such that the in-out-degree of each vertex is in $\{0,\pm 1\}$ and  the endvertices of any edge in $I_x$, except three edges incident with the vertices $x_1,x_2,x_3$,  have different in-out-degrees, then $d_D^{\pm}(x_1)=d_D^{\pm}(x_2)= d_D^{\pm}(x_3)= 1 $ or
$d_D^{\pm}(x_1)=d_D^{\pm}(x_2)= d_D^{\pm}(x_3)=-1 $.
\end{prop}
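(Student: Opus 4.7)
The plan is to exploit the rigidity imposed by $|d_D^{\pm}(v)| \leq 1$. Since the in-out-degree of any vertex has the parity of its degree, the hypothesis forces every degree-$2$ vertex of $I_x$ to have $d_D^{\pm}=0$ and every vertex of degree $1$ or $3$ to have $d_D^{\pm}\in\{+1,-1\}$. The constraint ``adjacent vertices of $I_x$ have distinct in-out-degrees'' then becomes, for a pair of adjacent odd-degree vertices, that they carry opposite signs; whereas for a degree-$2$ vertex $v$ it only forces exactly one in-arc and one out-arc at $v$, since the value $0$ at $v$ is automatically distinct from any odd-degree neighbour's $\pm 1$.

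Next, I would focus on the subgraph $I_x^{\mathrm{odd}}$ of $I_x$ induced by the odd-degree vertices, with adjacencies inherited from $I_x$. Since $G_{\Phi}$ (and hence $I_x$) is bipartite, so is $I_x^{\mathrm{odd}}$, and by the previous paragraph the signs $d_D^{\pm}$ on its vertices give a proper $\{+1,-1\}$-coloring. Inspecting the gadget in Fig.\ \ref{PP03}, one verifies that $I_x^{\mathrm{odd}}$ is connected and that $x_1, x_2, x_3$ all lie in the same class of its (unique) bipartition; equivalently, every pair among them is joined by an even-length path in $I_x^{\mathrm{odd}}$. Consequently $d_D^{\pm}(x_1)=d_D^{\pm}(x_2)=d_D^{\pm}(x_3)$, and both the common values $+1$ and $-1$ are realizable by the global arc-reversal symmetry (reversing every arc of $D$ negates all in-out-degrees).

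The main obstacle is the finite but careful case analysis inside $I_x^{\mathrm{odd}}$: one must trace paths connecting $x_1$, $x_2$, and $x_3$ through odd-degree vertices of $I_x$ and confirm that each such path has even length (so that opposite-sign propagation along edges returns to the same sign at the other endpoint). This step is routine once the reduction to a $\pm 1$-coloring problem on a bipartite subgraph has been made, but it is the only point at which the specific design of the gadget in Fig.\ \ref{PP03} is actually used, and it is where the correctness of the reduction relies.
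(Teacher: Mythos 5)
There is a genuine gap, and it sits exactly where the proposition has content. Your plan is to deduce the conclusion from the fact that the values $d_D^{\pm}\in\{+1,-1\}$ form a proper $2$-colouring of the bipartite graph on the odd-degree vertices of $I_x$, with $x_1,x_2,x_3$ lying in one colour class. But the hypothesis exempts precisely the three edges incident with $x_1,x_2,x_3$ from the ``distinct in-out-degrees'' requirement, and in $I_x$ each $x_i$ has degree one, so its \emph{unique} incident edge is exempt. Consequently the sign-alternation you invoke is not guaranteed on the first and last edge of any path joining two of the $x_i$'s, and in the subgraph of non-exempt edges the vertices $x_1,x_2,x_3$ are isolated: your colouring argument places no constraint at all on $d_D^{\pm}(x_1)$, $d_D^{\pm}(x_2)$, $d_D^{\pm}(x_3)$, and a priori all eight sign patterns on them remain compatible with it. The exemption is essential, not a technicality: in $G_{\Phi}$ each $x_i$ has degree two and in-out-degree $0$, so the edge from $x_i$ into the gadget automatically has distinctly valued endvertices, and the proposition must be proved without assuming any alternation across those edges.

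What is missing is a forcing argument, which is what the paper's proof supplies. After the proper-colouring step fixes the $\pm1$ values on the internal vertices (this part of your argument is fine and matches the paper's observation that all ``red'' vertices share one value and all ``blue'' vertices the other), one must trace the orientation edge by edge: for the internal neighbour $w$ of $x_i$, the orientations of all edges at $w$ other than $wx_i$ are already determined by the constraints deeper in the gadget, and then the requirement that $d_D^{\pm}(w)$ equal its assigned value forces the orientation of the remaining edge $wx_i$, hence the value of $d_D^{\pm}(x_i)$. This in-degree-minus-out-degree count at $w$ is quantitative and cannot be replaced by the parity/bipartition argument; it is also the only step that actually uses the specific design of the gadget in Fig.~\ref{PP03}.
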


\begin{subproof}
{
Note that  in the proof of this proposition the notation and colors that we refer 
 are depicted in 
Fig. \ref{PP03}.
In the orientation $D$ the in-out-degree of each vertex is in $\{0,\pm 1\}$. On the other hand, the degree of each vertex is one or three, so  the in-out-degree of each vertex is in $\{\pm 1\}$. In orientation $D$ the endvertices of any edge, except three edges incident with the vertices $x_1,x_2,x_3$,  have different in-out-degrees. Thus, the  red vertices have the same in-out-degree and also the   blue vertices have the same in-out-degree. Now, two cases can be considered.\\
{\bf Case 1.} The blue vertices have the in-out-degree $1$. Then the red vertices have the in-out-degree $-1$. We have $d_D^{\pm}(v_1)= d_D^{\pm}(v_2)= 1$, so the edges $v_1u_1, v_2u_1$ were oriented form $u_1$ to $v_1$ and $v_2$, respectively. The   in-out-degree of $u_1$ is $-1$. Thus, the edge $z_1u_1$ was oriented from $z_1$ to $u_1$. The in-out-degree of $z_1$ is $1$ and thus the edge $wz_1$was oriented from $w$ to $z_1$. 
We have the same situation for $z_2$. Its in-out-degree is $1$ and  the edge $wz_2$ was oriented from $w$ to $z_2$.  The vertex $w$ is a red vertex and its in-out-degree is $-1$. On the other hand, the edges $wz_1,wz_2$ were oriented from $w$ to $z_1$ and $z_2$. Thus, the edge $wx_2$ was oriented from $w$ to $x_2$ and consequently $d_D^{\pm}(x_2)=1$. We have the same conclusion for $x_1$ and $x_3$.
Thus, $d_D^{\pm}(x_1)=d_D^{\pm}(x_2)= d_D^{\pm}(x_3)= 1 $. \\
{\bf Case 2.} The blue vertices have the in-out-degree $-1$. Then the red vertices have the in-out-degree $1$. Similar to Case 1, we can show that $d_D^{\pm}(x_1)=d_D^{\pm}(x_2)= d_D^{\pm}(x_3)= -1 $. This completes the proof.
}
\end{subproof}

Let $D$ be an optimal in-out-proper orientation of $G_{\Phi}$.
Now, we present a Not-All-Equal truth assignment for the formula $\Phi$. Let $\Gamma : X \rightarrow \{ {\sf true},{\sf false} \} $ be the assignment defined by $\Gamma( x_i)= {\sf true}$  if the blue vertices in $I_x$ have the in-out-degree $1$, and $\Gamma( x_i)= {\sf false}$ if the blue vertices in $I_x$ have the in-out-degree $-1$.

\begin{figure}[]
\centering
\includegraphics [width=0.7\textwidth]{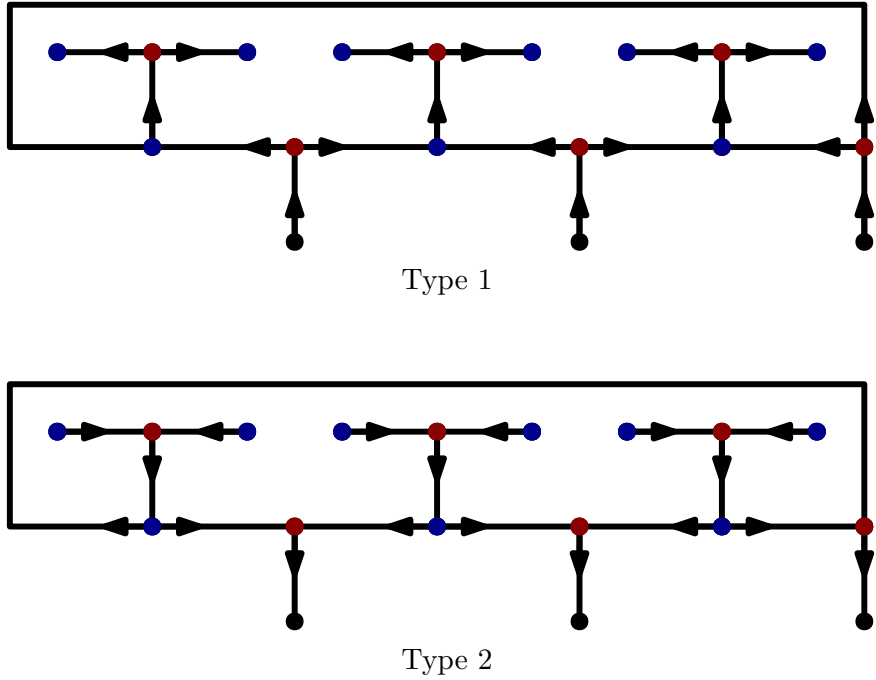}
\caption{The two possible orientations of $I_x$.}
\label{PP04}
\end{figure}

Next, we prove that $\Gamma$  is a  Not-All-Equal truth assignment for $\Phi$. Let $c=(x\vee y \vee r)$ and without loss of generality assume that $c x_1,c y_1,c r_1\in E(G_{\Phi})$.  The degree of the vertex $c$ is three, so 
$d_D^{\pm}(c)\in \{\pm 1\}$. Thus, at least one of the edges incident with $c$ was oriented from $c$ to the other endpoint. Note that the other endpoint is one of the vertices $x_1,y_1,r_1$. Also, at least one of the edges incident with $c$ was oriented toward $c$. 
On the other hand, the degree of vertices $x_1,y_1,r_1$ are two, so $d_D^{\pm}(x_1)=d_D^{\pm}(y_1)= d_D^{\pm}(r_1)= 0 $. Thus, ${\sf true},{\sf false} \in \{\Gamma (x), \Gamma (y), \Gamma (r)\}$.
Next, assume that $c=(x\vee y)$.  The degree of the vertex $c'$ (that corresponds to the clause $c$ in $C$) is two. So, $d_D^{\pm}(c')= 0$. Thus,  ${\sf true},{\sf false} \in \{\Gamma (x), \Gamma (y) \}$.

Now, assume that there is a Not-All-Equal assignment $\Gamma : X \rightarrow \{ {\sf true},{\sf false} \} $ for $\Phi$. For each variable $x\in X$ if $\Gamma(x)={\sf true}$ then orient $I_x$ like Type 2 in Fig. \ref{PP04} and if $\Gamma(x)={\sf false}$ then orient $I_x$ like Type 1 in Fig. \ref{PP04}. Also, for each clause $c=(x\vee y\vee r )$ orient the edges incident with $c$ such that the in-out-degree of each neighbor of $c$ is $0$. Call the resultant orientation $D$. The function $\Gamma $ is a  Not-All-Equal assignment, so $D$ is an in-out-proper orientation such that the maximum of absolute values of  their in-out-degrees is one. This completes the proof.  
 
\end{proof}

\section{Regular graphs}
\label{S6}

Next, we study the computational complexity of determining the  in-out-proper orientation number of  4-regular graphs.

\begin{proof}[Proof of Theorem~\ref{T5}]
It was shown that it is NP-complete to determine whether the edge chromatic number of a given 3-regular graph is three (see \cite{MR1367739}). We reduce this problem to our problem in polynomial time. For a given 3-regular graph $G$ we construct a 4-regular graph $H$ such that the edge chromatic number of $G$ is three if and only if 
 $\overleftrightarrow{\chi}(H) \leq 2$.

For a given graph $G$ with the set of edges $e_1,e_2,\ldots, e_n$,  let $H$ be the line graph of $G$ with the set of vertices $v_{e_1},v_{e_2},\ldots, v_{e_n}$, such that $v_{e_i}v_{e_j}\in E(H)$ if and only if $e_i$ and $e_j$ have a common endvertex.  First, assume that
the in-out-proper orientation number of $H$ is two and let $D$ be an optimal  in-out-proper orientation. The orientation $D$ defines a proper vertex 3-coloring for the vertices of $H$ using three colors $0,\pm 2$. Thus, $G$ has a proper edge 3-coloring. 

Next, assume that the edge chromatic number of $G$ is three and let $f:E(G) \rightarrow \{1,2,3\}$ be a proper edge 3-coloring of $G$. Define the function $h:V(H)\rightarrow \{1,2,3\}$ such that $h(e_{v_i})=k$ if and only if $f(e_i)=k$, for each $k=1,2,3$. Let $K$ be the subset of edges of $H$ such that for each edge $v_{e_i}v_{e_j}\in K$ we have $\{ h(v_{e_i}), h(v_{e_j}) \}=\{1,3\}$. In the subgraph $H \setminus K$ the degree of each vertex is even. In fact the degree of each vertex $v_{e_i} $ with $h(v_{e_i})=2$ is four and also
the degree of each vertex $v_{e_i} $ with $h(v_{e_i})\in \{1,3\}$ is two.
So we can orient the edges in $H \setminus K$ such that the in-degree of each vertex is equal to its out-degree. Next, for each edge $v_{e_i}v_{e_j}\in K$ orient it from $v_{e_i}$ to $v_{e_j}$ if $h(v_{e_i})=1$ and $h(v_{e_j})=3$, otherwise orient it from $v_{e_j}$ to $v_{e_i}$. Consider the union of orientations for $H \setminus K$ and $K$ and call the resultant orientation $D$. In $D$ the in-out-degree of each vertex $v_{e_i}$ with $ h(v_{e_i})=1$ ($ h(v_{e_i})=2$, $ h(v_{e_i})=3$, respectively) is $-2$ ($0,2$, respectively). Thus, $D$ is an in-out-proper orientation such that the maximum of absolute values of their in-out-degree is two. This completes the proof.
\end{proof}

\section{Conclusions and future research}
\label{S7}

In this work we studied the in-out-proper orientation number of graphs. We proved that for any graph $G$,  $ \overleftrightarrow{\chi}(G) \leq \Delta(G)$. 
We conjectured that there exists a constant number  $c$ such that for every planar graph $G$, we have $\overleftrightarrow{\chi}(G) \leq c $. Regarding this conjecture, we showed that for every tree $T$ we have $\overleftrightarrow{\chi}(T) \leq 3 $ and this bound is sharp.
It is interesting to prove constant bounds for other families of planar graphs.

We also studied the in-out-proper orientation number of subcubic graphs. By using the properties of totally unimodular matrices
we proved that there is a polynomial time algorithm to determine whether $\overleftrightarrow{\chi}(G) \leq 2$, for a given graph $G$ with maximum degree three.  It is interesting to present a polynomial time algorithm for other families of graphs.

It is also interesting to characterize all graphs $G$ which satisfy $\overrightarrow{\chi} (G) =\overleftrightarrow{\chi}(G) $. It would be interesting to attack this problem for the family of regular graphs.

\section{Acknowledgments}

The author would like to thank the anonymous referees for their useful comments which helped to improve the presentation of this paper.

\small
\bibliographystyle{plain}
\bibliography{In-out-proper}

\end{document}